\documentclass{amsart}

\usepackage{pdfsync}
\usepackage{amsmath,mathrsfs,amssymb,txfonts}

\usepackage{hyperref}
\hypersetup{
    colorlinks=true,       
    linkcolor=blue,          
    citecolor=blue,        
    filecolor=blue,      
    urlcolor=blue           
}

\newtheorem{theo}{Theorem}[section]
\newtheorem{coro}{Corollary}
\newtheorem{prop}{Proposition}

\newtheorem{rema}{Remark}

\begin{document}

\title{Holomorphic differentials of Generalized Fermat curves}

\author[R. A. Hidalgo]{Rub\'en A. Hidalgo}
\address{Departamento de Matem\'atica y Estad\'istica, Universidad de la Frontera, Temuco, Chile}
\email{ruben.hidalgo@ufrontera.cl}

\thanks{Partially supported by the projects Fondecyt 1190001}

\keywords{Curves, automorphisms, holomorphic differentials, Cartier operator}

\subjclass[2010]{14H05; 14H30; 14H45}

\maketitle

\begin{abstract} 
A non-singular complete irreducible algebraic curve $F_{k,n}$, defined over an algebraically closed field $K$, is called a generalized Fermat curve of type $(k,n)$, where $n, k \geq 2$ are integers and $k$ is relatively prime to the characteristic $p$ of $K$, if it admits a group $H \cong {\mathbb Z}_{k}^{n}$ of automorphisms such that $F_{k,n}/H$ is isomorphic to ${\mathbb P}_{K}^{1}$ and it has exactly $(n+1)$ cone points, each one of order $k$. By the Riemann-Hurwitz-Hasse formula, $F_{k,n}$ has genus at least one if and only if $(k-1)(n-1) >1$. In such a situation, we construct a basis, called an standard basis, of its space $H^{1,0}(F_{k,n})$ of regular forms, containing a subset of cardinality  $n+1$ that provides an embedding of $F_{k,n}$ into ${\mathbb P}_{K}^{n}$ whose image is the fiber product of $(n-1)$ classical Fermat curves of degree $k$. For $p=2$, we obtain 
a lower bound (which is sharp for $n=2,3$) for the dimension of  the space of the exact one-forms, that is, the kernel of the Cartier operator.
Also, we done this for $p=3$, $k=2$ and $n=4$.
\end{abstract}

\section{Introduction} 
In this paper, $K$ will denote an algebraically closed field of characteristic $p \geq 0$, and $k,n \geq 2$ will be integers such that $(k-1)(n-1)>1$. Moreover, for $p>0$, we also assume that $k$ is relatively prime to $p$. By an (projective or affine) algebraic curve over $K$ we mean a non-singular complete irreducible (projective or affine) algebraic curve $X$ defined over $K$. Its group of automorphisms ${\rm Aut}(X):={\rm Aut}(K(X)/K)$ is the group of those field automorphisms of the function field $K(X)$ fixing the base field $K$.

A {\it generalized Fermat curve of type $(k,n)$} is a projective algebraic curve $F_{k,n}$ over $K$ admitting 
a group ${\mathbb Z}_{k}^{n} \cong H \leq {\rm Aut}(F_{k,n})$ such that $F_{k,n}/H$ is the projective line ${\mathbb P}_{K}^{1}$ with exactly $(n+1)$ cone points, each one of order $k$. In this case, $H$ is called a {\it generalized Fermat group of type $(k,n)$} of $F_{k,n}$. By the Riemann-Hurwitz-Hasse formula, $F_{k,n}$ has genus  $g_{k,n}=1+\frac{k^{n-1}}{2} \left((k-1)(n-1)-2)\right)$. (Generalized Fermat curves of type $(k,n)$ form a family of algebraic curves of dimension $n-2$ in moduli space of genus $g_{k,n}$.) 

In \cite{HKLP} it was proved that, for $(n-1)(k-1)>2$, (i) $F_{k,n}$ has a unique generalized Fermat group of type $(k,n)$ (for $p>0$, we must assume that $k-1$ is not a power of $p$) and (ii) ${\rm Aut}(F_{k,n})$ is a subgroup of the linear group ${\rm PGL}_{n+1}(K)$ (for $p>0$ we need to assume that either $k-1$ is not a power of $p$ or that $n+1$ is relatively prime to $k$).

Let $\pi:F_{k,n} \to {\mathbb P}_{K}^{1}$ be a regular branched cover with deck group $H$. Up to post-composition of $\pi$ by a suitable M\"obius transformation (that is, an element of ${\rm PGL}_{2}(K)$), we may assume the branch values of $\pi$ to be given by $\infty, 0, 1$, $\lambda_{1},\ldots, \lambda_{n-2}$. Now, using these values, one may construct the generalized Fermat curve $C^{k}_{\lambda_{1},\ldots,\lambda_{n-2}} \subset {\mathbb P}^{n}_{K}$ of type $(k,n)$, this being a fiber product of $(n-1)$ classical Fermat curves of degree $k$ (see Section \ref{Sec:curvaalgebraica}). This has a generalized Fermat group $H_{0}$, being generated by diagonal linear transformations, and the quotient $C^{k}_{\lambda_{1},\ldots,\lambda_{n-2}}/H_{0}$ being ${\mathbb P}_{K}^{1}$ with the same above cone points. In \cite{GHL}, for $K={\mathbb C}$, using the theory of Kleinian groups, it was proved the existence of an isomorphism between 
$F_{k,n}$ and $C^{k}_{\lambda_{1},\ldots,\lambda_{n-2}}$, conjugating $H$ to $H_{0}$. 
Our first main result, Theorem \ref{teo1}, is that such an isomorphism exists for all $K$ of characteristic prime to $k$.

Let $H^{1,0}(F_{k,n})=H^{0}(F_{k,n},\Omega^{1})$ be the $g_{k,n}$-dimensional $K$-space of holomorphic (i.e. everywhere regular) forms of $F_{k,n}$. In Section \ref{Sec:standardbasis} (see Theorem \ref{teo:basis}) we construct a basis ${\mathcal B}^{can}=\{\theta_{1},\ldots, \theta_{g_{k,n}}\}$ of $H^{1,0}(F_{k,n})$, called a {\it standard basis}, such that 
$$\widehat{\iota}=\widehat{\iota}_{{\mathcal B}^{can}}:F_{k,n} \hookrightarrow {\mathbb P}_{K}^{n}:p \mapsto [\theta_{1}(p):\cdots:\theta_{n+1}(p)]$$ 
provides a rational embedding (see Proposition \ref{obs1}), called an {\it standard embedding}, with $\widehat{\iota}(F_{k,n})=C^{k}_{\lambda_{1},\ldots,\lambda_{n-2}}$. In \cite{HKLP} it was noted that 
the hyper-osculating points of the standard embedding  are the fixed points of the non-trivial elements of the corresponding Fermat generalized group.

As previously noted, for $(n-1)(k-1)>2$, $F_{k,n}$ is non-hyperelliptic \cite{GHL}, so the standard basis induces a {\it canonical embedding} $\iota_{\mathcal B^{can}}:F_{k,n} \hookrightarrow {\mathbb P}_{K}^{g_{k,n}-1}$. The standard embedding is then obtained from the canonical embedding by forgetting some coordinates (explicit examples are provided in Section \ref{Sec:Ejemplos}).
(If $g_{k,n}>n+1$, which happens if and only if $(n-1)(k-1)\geq 4$, then the standard embedding is not a canonical one.)

Let us now restrict to characteristic $p>0$. The investigation of algebraic curves over such fields is related to several problems for curves 
over finite fields, such as the cardinality of the set of rational points, the search for maximal curves with respect to the Hasse-Weil bound and properties on zeta functions (these being current research topics). Many results have been obtained for classical Fermat curves (i.e., $n=2$) \cite{GaVo,VoZie}.  
The kernel of the Cartier operator ${\mathscr C}_{k,n}:H^{1,0}(F_{k,n}) \to H^{1,0}(F_{k,n})$, introduced by Cartier in \cite{Cartier} (we recall it in Section \ref{Sec:Cartier}), consists of the exact one-forms of $F_{k,n}$. Its dimension $a_{F_{k,n}}$ is an invariant of the $p$-torsion group-scheme $JF_{k,n}[p]$. More precisely, if $\alpha_{p} \cong {\rm Spec}(K[z]/\langle z^{p} \rangle)$, the group-scheme of $p^{th}$-roots of zero, then $a_{F_{k,n}}={\rm dim}_{K} {\rm Hom}(\alpha_{p},JF_{k,n}[p])$. This number provides a partial information on the number of simple factors of the jacobian variety $JF_{k,n}$. 

To compute $a_{F_{k,n}}$, one may proceed as follows: (I) evaluate ${\mathscr C}_{k,n}(\omega)$, for each element $\omega$ of the standard basis (and write it as a linear combination on the same basis) and (II) apply ${\mathscr C}_{k,n}$ to linear combinations of the elements of the standard basis and look for algebraic conditions (on the coefficients of the linear combination) to obtain elements in the kernel of ${\mathscr C}_{k,n}$.
With respect to step (I), we provide the evaluations for: (i) $p=2$ (Theorem \ref{imagencartier}) and (ii) $p \geq 3$ and $k$ dividing $p-1$ (Theorem \ref{imagencartier2}). Now, following step (II), for $p=2$ and $k \geq 3$ odd, we obtain the lower bound $a_{F_{k,n}} \geq (n-1)(k-1) (k+1)^{n-1}/ 2^{n+1}$ (see Theorem \ref{lowerbound}).
If $n \in \{2,3\}$, then such a lower bound is sharp and, for $n=4$, this is not the case. Similarly, for $p=3$, $n=4$ and $k=2$, in Section \ref{Sec:p=3}, we obtain that: (a) $a_{F_{2,4}}=1$ if either $\lambda_{1}=-1$ or $\lambda_{2}=-1$ or $\lambda_{1}+\lambda_{2}=-1$ or $\lambda_{1}+\lambda_{2}+\lambda_{1}\lambda_{2}=0$, and (b) $a_{F_{2,4}}=0$, otherwise.

\section{Generalized Fermat curves}
In this section, we describe some general facts on generalized Fermat curves. As before, $K$ denotes an algebraically closed field of characteristic $p \geq 0$.  

\subsection{The Riemann-Hurwitz-Hasse formula}
For generalities on algebraic curves over fields of positive characteristic the author can check, for instance, \cite{HKT, Oort}. 
Next, we recall the Riemann-Hurwitz-Hasse formula (we will only need the case when all local degrees are relatively prime to the characteristic of the field). In the case of characteristic zero this was given by Hurwitz \cite{Hurwitz} and, for positive characteristic, this was stated by Hasse \cite{Hasse}.
For a  proof, and definitions, see \cite[Section 4.3.]{Oort}.

\begin{theo}[Riemann-Hurwitz-Hasse formula]
Let $K$ be an algebraically closed field of characteristic $p \geq 0$.
Let $\pi:C \to E$ be a branch cover (that is, a separable, finite morphism) between the smooth irreducible algebraic projective curves $C$ and $E$, of respectively genus $g$ and $\gamma$, everything defined over $K$. If $p>0$, let us assume that the local degrees $d_{c}$ at the points $c \in C$ are relatively prime to $p$. Then 
$$d=\sum_{c \in \pi^{-1}(e)} d_{c}, \; \forall e \in E, \quad
g=d(\gamma-1)+1+\frac{1}{2} \sum_{c \in C} (d_{c}-1).$$
\end{theo}

\begin{rema}
In characteristic $p>0$ the condition for $\pi$ to be separable is necessary.  For instance, if we consider the injective (but not an isomoprhism) morphism $\pi:{\mathbb P}_{K}^{1} \to {\mathbb P}_{K}^{1}$, defined by $\pi(z)=z^{p}$,
then $g=\gamma=0$, $d=p$ and $d_{c}=p$, for every $c \in {\mathbb P}_{K}^{1}$ (the extension $K(x)/K(x^{p})$ is purely inseparable). 
\end{rema}

\begin{coro}[Riemann-Hurwitz-Hasse formula for regular branched covers]
Let $K$ be an algebraically closed field of characteristic $p \geq 0$.
Let $\pi:C \to E$ be a branch cover between the smooth irreducible algebraic projective curves $C$ and $E$, of respectively genus $g$ and $\gamma$, everything defined over $K$. Let us assume $\pi$ is a regular branched covering with deck group a finite group $G<{\rm Aut}(C)$. If $p>0$, then we also assume that the order of $G$ is relatively prime to $p$. Then 
$$g=|G|(\gamma-1)+1+\frac{|G|}{2} \sum_{e \in B_{\pi}} (1-m_{e}^{-1}),$$
where $m_{e}$ is the order of the $G$-stabilizer of any point in the fiber $\pi^{-1}(e)$ and $B_{\pi}$ is the set of branch values of $\pi$.
\end{coro}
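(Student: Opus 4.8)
The plan is to deduce this directly from the Riemann--Hurwitz--Hasse formula of the preceding Theorem, the only work being to identify the local data of a regular branched covering in terms of the group $G$. First I would record that, since $\pi:C\to E$ is regular with deck group $G$ and $E\cong C/G$, the degree of the induced extension $\pi^{*}:K(E)\to K(C)$ equals $d=|G|$; when $p>0$ the hypothesis that $|G|$ is relatively prime to $p$ guarantees that this extension is separable (an extension of degree prime to $p$ is automatically separable), so $\pi$ is a branched cover in the sense required by the Theorem and has only finitely many critical points.

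Next I would analyse the fibers. Because $G$ is the deck group, it acts transitively on each fiber $\pi^{-1}(e)$; fix $e \in E$ and a point $c \in \pi^{-1}(e)$, and let $G_{c}<G$ be its stabilizer, of order $m_{e}:=|G_{c}|$ (this order does not depend on the chosen point of the fiber, as all point stabilizers in a given fiber are conjugate in $G$). Then $|\pi^{-1}(e)|=|G|/m_{e}$, and a standard local computation --- using that $G_{c}$ acts on the local ring ${\mathcal O}_{c}(C)$ fixing a uniformizer up to a primitive $m_{e}$-th root of unity --- shows that the local degree of $\pi$ at each $c \in \pi^{-1}(e)$ is exactly $d_{c}=m_{e}$. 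In particular $c$ is a critical point precisely when $m_{e}>1$, i.e. exactly when $e$ is a branch value, so $B_{\pi}=\{e \in E : m_{e}>1\}$. Moreover $m_{e}$ divides $|G|$, hence $m_{e}$ is relatively prime to $p$ (when $p>0$), and therefore $\delta_{c}=d_{c}-1=m_{e}-1$ for every such $c$.

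Finally I would substitute into the formula $g=d(\gamma-1)+1+\tfrac{1}{2}\sum_{c \in C}(d_{c}-1)$ provided by the Theorem. The sum ranges effectively over the critical points, which are grouped into the fibers over the branch values; the fiber over $e \in B_{\pi}$ contributes $\frac{|G|}{m_{e}}(m_{e}-1)=|G|\bigl(1-m_{e}^{-1}\bigr)$, and summing over $e \in B_{\pi}$ yields
$$g=|G|(\gamma-1)+1+\frac{|G|}{2}\sum_{e \in B_{\pi}}\bigl(1-m_{e}^{-1}\bigr),$$
as claimed. The only genuinely delicate point is the local claim $d_{c}=m_{e}$: one must verify that the quotient map $C \to C/G$ is tamely ramified at $c$ with ramification index $m_{e}$, equivalently that the action of the stabilizer $G_{c}$ can be linearized near $c$ so that $G_{c}$ acts faithfully by a character on the cotangent space. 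This is precisely where the coprimality of $|G|$ with $p$ enters, ensuring the different contributes tamely; once this is granted, the rest is the bookkeeping above.
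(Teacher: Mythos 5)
Your derivation is correct and is exactly the argument the paper leaves implicit: the corollary is stated without proof as an immediate consequence of the Riemann--Hurwitz--Hasse theorem, and your bookkeeping (degree $d=|G|$, separability from $\gcd(|G|,p)=1$, local degree $d_{c}=m_{e}$ with tame different $\delta_{c}=m_{e}-1$, and the fiber count $|G|/m_{e}$) supplies precisely the missing steps. You also correctly isolate the one nontrivial point, namely that the quotient map is tamely ramified with ramification index equal to the stabilizer order, which is where the coprimality hypothesis is used.
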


\subsection{Algebraic models of generalized Fermat curves}\label{Sec:curvaalgebraica}
Let $k,n \geq 2$ be integers such that $k$ is relatively prime to $p$. Let $F_{k,n}$ be a generalized Fermat curve of type $(k,n)$ and $H \cong {\mathbb Z}_{k}^{n}$ be a generalized Fermat group of type $(k,n)$ for it. Let us consider a branch covering $\pi:F_{k,n} \to {\mathbb P}^{1}_{K}$, with $H$ as its deck group; it has exactly $n+1$ branch values, each one of order $k$. Up to post-composition by a suitable M\"obius transformation (i.e., an element of ${\rm PGL}_{2}(K)$), we may assume these branch values to be given by the points $\infty,0,1,\lambda_{1},\ldots,\lambda_{n-2}$. As $H$ has order relatively prime to $p$, we may use the Riemann-Hurwitz-Hasse formula to obtain that the genus of $F_{k,n}$ is $g_{k,n}$ as stated in the introduction.
Now, we may consider the projective algebraic curve
$$C^{k}_{\lambda_{1},\ldots,\lambda_{n-2}}:=\left\{ \begin{array}{rcl}
x_{1}^{k}+x_{2}^{k}+x_{3}^{k}&=&0\\
\lambda_{1} x_{1}^{k}+x_{2}^{k}+x_{4}^{k}&=&0\\
&\vdots&\\
\lambda_{n-2} x_{1}^{k}+x_{2}^{k}+x_{n+1}^{k}&=&0
\end{array}
\right\} \subset {\mathbb P}_{K}^{n}
$$

As $\infty,0,1,\lambda_{1},\ldots,\lambda_{n-2}$ are pairwise different, it can be seen that $C^{k}_{\lambda_{1},\ldots,\lambda_{n-2}}$ is non-singular and irreducible. Let $\omega_{k} \in K$ be a primitive $k$-root of unity (recall we are assuming $k$ to be relatively prime to $p$).
For each $j=1,\ldots,n+1$, 
$$a_{j}([x_{1}:\cdots:x_{n+1}]=[x_{1}:\cdots:x_{j-1}:\omega_{k} x_{j}: x_{j+1}: \cdots:x_{n+1}]$$
is an element of ${\rm Aut}(C^{k}_{\lambda_{1},\ldots,\lambda_{n-2}})$ of order $k$ and acting with  $k^{n-1}$ fixed points (these being the intersection points of the curve with the hyperplane $x_{j}=0$) and $a_{1}a_{2}\cdots a_{n}a_{n+1}=1$ Also, $H_{0}=\langle a_{1},\ldots,a_{n}\rangle \cong {\mathbb Z}_{k}^{n}$ and the only non-trivial elements of $H$ acting with fixed points in the curve are those being the non-trivial powers of $a_{1},\ldots, a_{n+1}$. 
The elements $a_{1},\ldots, a_{n+1}$ are called the {\it standard generators} of $H_{0}$.
The morphism
$$\pi_{0}:C^{k}(\lambda_{1},\ldots,\lambda_{n-2}) \to {\mathbb P}_{K}^{1}: [x_{1}:\cdots:x_{n+1}] \mapsto [-x_{2}^{k}:x_{1}^{k}]$$
is a branched cover whose deck group is $H_{0}$ and branch values are $\infty,0,1,\lambda_{1},\ldots,\lambda_{n-2}$, each one of order $k$. In particular, $C^{k}_{\lambda_{1},\ldots,\lambda_{n-2}}$ is a generalized Fermat curve of type $(k,n)$ and $H_{0}$ a generalized Fermat group of type $(k,n)$ of it. 

In \cite{GHL}, for $K={\mathbb C}$, using the thoery of Riemann surfaces and Kleinian groups, it was proved the existence of an isomorphism between $F_{k,n}$ and $C^{k}_{\lambda_{1},\ldots,\lambda_{n-2}}$ conjugating $H$ to $H_{0}$. Our first result is that this holds in general.

\begin{theo}\label{teo1}
There is an isomorphism $\phi:F_{k,n} \to C^{k}_{\lambda_{1},\ldots,\lambda_{n-2}}$ conjugating $H$ to $H_{0}$.
\end{theo}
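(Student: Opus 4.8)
The plan is to use the fact that both $F_{k,n}$ and $C^{k}_{\lambda_1,\ldots,\lambda_{n-2}}$ are regular branched covers of ${\mathbb P}^1_K$ with the same deck group ${\mathbb Z}_k^n$ and the same branch locus $\{\infty,0,1,\lambda_1,\ldots,\lambda_{n-2}\}$, with matching local monodromy, and to invoke uniqueness of such a cover. Concretely, I would first fix the branched covering $\pi:F_{k,n}\to{\mathbb P}^1_K$ with deck group $H$ and branch values $\infty,0,1,\lambda_1,\ldots,\lambda_{n-2}$ (this normalization is available after post-composing with a M\"obius transformation, as discussed in the excerpt), and recall from Section \ref{Sec:curvaalgebraica} that $\pi_0:C^{k}_{\lambda_1,\ldots,\lambda_{n-2}}\to{\mathbb P}^1_K$ is a branched covering with deck group $H_0\cong{\mathbb Z}_k^n$ and the very same branch values, each of order $k$.

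The key step is to produce an isomorphism of covers. I would consider the compositum $K(F_{k,n})\cdot K(C^{k}_{\lambda_1,\ldots,\lambda_{n-2}})$ over $K({\mathbb P}^1_K)=K(t)$, where $t$ is the coordinate on the base pulled back by $\pi$ and $\pi_0$ respectively; equivalently, work with the normalization $Z$ of the fiber product $F_{k,n}\times_{{\mathbb P}^1_K}C^{k}_{\lambda_1,\ldots,\lambda_{n-2}}$. Both $K(F_{k,n})/K(t)$ and $K(C^{k}_{\lambda_1,\ldots,\lambda_{n-2}})/K(t)$ are Galois with group ${\mathbb Z}_k^n$; since $k$ is prime to $p$, the relevant extensions are tamely ramified, so each is determined by its ramification data over the $n+1$ branch points. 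A cleaner route: recall that a Galois extension of $K(t)$ with group ${\mathbb Z}_k^n$ ramified only over $\{\infty,0,1,\lambda_1,\ldots,\lambda_{n-2}\}$ corresponds to a surjection from the tame fundamental group (or, in the prime-to-$p$ setting, from the abelianized free group on $n+1$ generators modulo the product relation, tensored with ${\mathbb Z}_k$) onto ${\mathbb Z}_k^n$. The condition that all $n+1$ cone points have order exactly $k$ forces (up to an automorphism of ${\mathbb Z}_k^n$) the monodromy around the branch points to be the standard generators $a_1,\ldots,a_{n+1}$ with $a_1\cdots a_{n+1}=1$, and any two such surjections differ by an automorphism of ${\mathbb Z}_k^n$ together with a permutation/relabeling that fixes the marked branch points. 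Hence the two extensions of $K(t)$ are isomorphic, and the isomorphism can be chosen to intertwine $H$ with $H_0$ (absorb any discrepancy by an automorphism of ${\mathbb Z}_k^n$).

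From such a $K(t)$-algebra isomorphism $K(F_{k,n})\xrightarrow{\sim}K(C^{k}_{\lambda_1,\ldots,\lambda_{n-2}})$, taking the induced morphism on the smooth projective models yields the desired isomorphism $\phi$, and by construction it conjugates the deck group $H$ of $\pi$ to the deck group $H_0$ of $\pi_0$. One should check that $\phi$ respects the labeling of the standard generators $a_1,\ldots,a_{n+1}$ in the sense that the stabilizer subgroups over $\lambda_i$ correspond; this is where the normalization of branch values to $\infty,0,1,\lambda_1,\ldots,\lambda_{n-2}$ is used, since a relabeling of the branch points would correspond to a different cross-ratio data, and the $\lambda_i$ are precisely the invariants distinguishing the marked points.

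The main obstacle I anticipate is making the "uniqueness of the cover" argument rigorous in arbitrary characteristic prime to $k$ without relying on the topological (complex-analytic) fundamental group used in \cite{GHL}: one must instead argue via Kummer theory / tame fundamental group of ${\mathbb P}^1_K\setminus\{\infty,0,1,\lambda_1,\ldots,\lambda_{n-2}\}$, whose prime-to-$p$ (in particular its $\mathbb{Z}_k$) quotient is known to be the same as in characteristic zero (by Grothendieck's specialization theorem for the tame fundamental group). Given that input, the classification of surjections onto ${\mathbb Z}_k^n$ with the prescribed local behavior, and hence the uniqueness up to the indicated equivalences, is a purely group-theoretic bookkeeping exercise. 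Alternatively, and perhaps more elementarily for this paper, one can write $F_{k,n}$ explicitly: the function $y_j$ defined as a $k$-th root of the appropriate M\"obius function of $t$ (normalized so its divisor of zeros and poles sits over the branch points) generates the fixed field of the sub-$\mathbb{Z}_k^{n-1}$ complementary to $a_j$, and the collection $(y_1,\ldots,y_{n+1})$ satisfies exactly the defining equations of $C^{k}_{\lambda_1,\ldots,\lambda_{n-2}}$; this gives $\phi$ by $p\mapsto[y_1(p):\cdots:y_{n+1}(p)]$ directly, and reduces the whole proof to verifying that these $y_j$ exist, are well defined up to scaling by $k$-th roots of unity, and satisfy the stated relations — a short computation using that $K$ contains a primitive $k$-th root of unity and the Riemann-Hurwitz-Hasse count to confirm the resulting curve has the right genus.
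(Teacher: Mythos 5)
Your proposal is correct, and in fact contains two routes: your primary argument (uniqueness of the tame $\mathbb{Z}_k^n$-cover via the prime-to-$p$ quotient of the tame fundamental group of $\mathbb{P}^1_K$ minus $n+1$ points) is genuinely different from the paper's, while your closing ``more elementary'' alternative is essentially what the paper does. The paper's sketch is constructive: it picks generators $b_1,\ldots,b_{n+1}$ of $H$ stabilizing the fibers over the branch points, shows via Riemann--Hurwitz--Hasse that any $n$ of them generate $H$, forms the index-$k$ subgroups $H_j$, proves $F_{k,n}/H_j\cong\mathbb{P}^1_K$, normalizes the induced degree-$k$ cyclic covers to $q_j(z)=z^k+p_j$ (and $q_2(z)=-z^k$), and reads off the Fermat equations from $-y_2^k=\pi=y_j^k+p_j$; the map $p\mapsto[x_1(p):\cdots:x_{n+1}(p)]$ is then the isomorphism. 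Your $\pi_1$-route buys conceptual clarity and makes the uniqueness statement transparent (any surjection from $\mathbb{Z}_k^{n+1}/\langle x_1\cdots x_{n+1}\rangle\cong\mathbb{Z}_k^n$ onto $\mathbb{Z}_k^n$ is an isomorphism, so two such covers differ by an automorphism of the group, which is exactly the conjugation of $H$ to $H_0$), but at the cost of invoking Grothendieck's description of the tame fundamental group, a much heavier input than the paper needs. The paper's route buys the explicit coordinates $y_j$, which are then used throughout the rest of the paper to build the standard basis of holomorphic forms, so it is not merely a proof of existence. Two small points to tighten in your alternative route: you should justify that the fixed field of $H_j$ has genus zero (this is the Riemann--Hurwitz--Hasse computation showing $F_{k,n}/H_j\cong\mathbb{P}^1_K$, which is what licenses writing $y_j$ as a $k$-th root of a M\"obius function of $t$ via Kummer theory), and you should say a word about why the resulting morphism to $C^k_{\lambda_1,\ldots,\lambda_{n-2}}$ has degree one (e.g., the $y_j$ generate the whole function field $K(F_{k,n})$ over $K$, since the corresponding subgroups $H_j$ intersect trivially).
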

\begin{proof}
We continue with the previous notations. We identify ${\mathbb P}_{K}^{1}$ with $K \cup \{\infty\}$ and set 
$p_{1}=\infty, p_{2}=0, p_{3}=1, p_{4}=\lambda_{1},\ldots, p_{n+1}=\lambda_{n-2}$. For each $j=1,\ldots,n+1$, consider a point $\alpha_{j} \in \pi^{-1}(p_{j})$. There is some $b_{j} \in H$, of order $k$, such that $b_{j}(\alpha_{j})=\alpha_{j}$. As the group $H$ acts transitively on the set $\pi^{-1}(p_{j})$ and it is abelian, we observe that $\pi^{-1}(p_{j}) \subset {\rm Fix}(b_{j})$. Note that $H=\langle b_{1},\ldots,b_{n+1}\rangle$; otherwise, we will obtain an unbranched covering of degree at least two, say $F_{k,n}/\langle b_{1},\ldots,b_{n+1}\rangle \to {\mathbb P}_{K}^{1}$, a contradiction by the Riemann-Hurwitz formula. Next, for each $j=1,\ldots,n+1$, we observe that $H=\langle b_{1},\ldots,b_{j-1},b_{j+1},\ldots,b_{n+1}\rangle$. In fact, if this is not the case, then there is a branched cover $F_{k,n}/\langle b_{1},\ldots,b_{j-1},b_{j+1},\ldots,b_{n+1}\rangle \to {\mathbb P}_{K}^{1}$ with exactly one branch value of order $k$; again a contradiction by the Riemann-Hurwitz-Hasse fomula.
Set $H_{2}=\langle b_{3},\ldots,b_{n+1}\rangle$ and, for $j=3,\ldots,n+1$, set
$H_{j}=\langle b_{2},\ldots,b_{j-1},b_{j+1},\ldots,b_{n+1}\rangle$. We may observe that $H_{j} \cong {\mathbb Z}_{k}^{n-1}$ and, applying the Riemann-Hurwitz-Hasse formula, one has that $F_{k,n}/H_{j}$ is ${\mathbb P}_{K}^{1}$ with exactly $(n-1)k$ cone points, each one of order $k$, there is a M\"obius transformation $A \in {\rm PGL}_{2}(K)$, of order $k$, permuting cyclically these points (so there are $n-1$ orbits), there is a branched cover $y_{j}:F_{k,n} \to {\mathbb P}_{K}^{1}$, whose deck group is $H_{j}$, and each $\langle b_{1} \rangle$ and $\langle b_{j} \rangle$ induces under $y_{j}$ the cyclic group $\langle A \rangle$. In particular, there is a regular branch cover $q_{j}:{\mathbb P}_{K}^{1} \to {\mathbb P}_{K}^{1}$ with deck group $\langle A \rangle$, such that $\pi=q_{j} \circ y_{j}$. The branch values of $q_{j}$ are $p_{1}=\infty$ and $p_{j}$.
Up to post-composition of $y_{j}$ by a suitable M\"obius transformation, we may assume $A(z)=\omega_{k}z$, so $q_{j}(z)=z^{k}+p_{j}$, for $j \geq 3$, and we assume also $q_{2}(z)=-z^{k}$. 
The divisor of poles of $y_{j}$ is defined by ${\rm Fix}_{div}(b_{1})$ and its divisor of zeroes is defined by ${\rm Fix}_{div}(b_{j})$, where ${\rm Fix}_{div}(b_{j})$ is the divisor $\sum_{p \in {\rm Fix}(b_{j})} p$. This observation will be very useful when constructing regular differential forms over generalized Fermat curves.
In this way, we may write $y_{j}=x_{j}/x_{1}$.
Using the equalities $-y_{2}^{k}=q_{2}(y_{2})=\pi=q_{j}(y_{j})=y_{j}^{k}+p_{j}$, for $j \geq 3$, we may observe that
the morphism 
$\phi:F_{k,n} \to C^{k}_{\lambda_{1},\ldots,\lambda_{n-2}}$, defined by $\phi(p)= [x_{1}(p):x_{2}(p):\cdots:x_{n+1}(p)]$,
produces the desired isomorphism.
\end{proof}

\subsection{Parameter space}
It can be seen, from the previous algebraic description, that the generalized Fermat curves of type $(k,2)$ are exactly the classical Fermat curves of degree $k$. In the case that $n \geq 3$, 
as a consequence of the previous algebraic description, the domain
$$\Omega_{n}=\{(\lambda_{1},\ldots,\lambda_{n-2}) \in K^{n-2}: \lambda_{j} \neq 0,1; \; \lambda_{j} \neq \lambda_{i}\} \subset K^{n-2}$$
provides a parameter space of the generalized Fermat curves of type $(k,n)$. Observe that this space is independent of $k$ and it happens to be 
the moduli space of the ordered $(n+1)$ points in ${\mathbb P}_{K}^{1}$. In \cite{GHL} it was observed that 
$C^{k}_{\lambda_{1},\ldots,\lambda_{n-2}}$ and $C^{k}_{\mu_{1},\ldots,\mu_{n-2}}$ are isomorphic if and only if 
$(\lambda_{1},\ldots,\lambda_{n-2})$ and $(\mu_{1},\ldots,\mu_{n-2})$ belong to the same orbit under the group ${\mathbb G}_{n}$ of  automorphisms of $\Omega_{n}$ generated by the transformations 
$$U(\lambda_{1},\ldots,\lambda_{n-2})=\left(\frac{\lambda_{n-2}}{\lambda_{n-2}-1}, \frac{\lambda_{n-2}}{\lambda_{n-2}-\lambda_{1}}, \cdots, \frac{\lambda_{n-2}}{\lambda_{n-2}-\lambda_{n-2}} \right)$$
$$V(\lambda_{1},\ldots,\lambda_{n-2})=\left(\frac{1}{\lambda_{1}},\cdots,\frac{1}{\lambda_{n-2}}\right).$$

Observe that, for $n \geq 4$, ${\mathbb G}_{n} \cong {\mathfrak S}_{n+1}$ and that ${\mathbb G}_{3} \cong {\mathfrak S}_{3}$.
In this way, the moduli space of generalized Fermat curves of type $(k,n)$, where $k \geq 2$ and $n \geq 3$, is provided by the geometric quotient $\Omega_{n}/{\mathbb G}_{n}$; which happens to be the moduli space of $(n+1)$ points in ${\mathbb P}_{K}^{1}$.

\begin{rema}[Automorphisms]
Assume $(k-1)(n-1)>2$, so $H \cong {\mathbb Z}_{k}^{n}$ is the unique generalized Fermat group of type $(k,n)$ of $F_{k,n}:=C^{k}_{\lambda_{1},...,\lambda_{n-2}}$ \cite{HKLP}. We have the regular covering map $\pi([x_{1}:\cdots:x_{n+1}])=-(x_{2}/x_{1})^{k}$, with $H$ as its deck group, whose set of branched values is ${\mathcal B}_{\pi}=\{\mu_{1}=\infty, \mu_{2}=0, \mu_{3}=1, \mu_{4}=\lambda_{1},\cdots, \mu_{n+1}=\lambda_{n-2}\}$.
Let $G$ be the finite subgroup of ${\rm PSL}_{2}(K)$ keeping the set ${\mathcal B}_{\pi}$ invariant. The description of all the finite subgroups of ${\rm PGL}_{2}(K)$ can be found, for instance, in \cite{HKT, Sanjeewa, VM}). 
The uniqueness of $H$ asserts that, for each $T \in {\rm Aut}(F_{k,n})$ there is an $\eta(T) \in G$ such that $\pi \circ T= \eta(T) \circ \pi$ (and conversely, for every $A \in G$ there is some $T \in {\rm Aut}(F_{k})$ such that $\eta(T)=A$).  This produces a surjective homomorphism of groups
 $\eta:{\rm Aut}(F_{k,n}) \to G$, whose kernel is $H$, and  a short exact sequence
$1 \rightarrow H \hookrightarrow {\rm Aut}(F_{k,n}) \stackrel{\eta}{\rightarrow} G \rightarrow 1$ (so, the reduced group ${\rm Aut}(F_{k,n})/H$ is naturally isomorphic (via $\eta$) to the finite group $G$). 
In \cite{GHL} there has been explained an explicit method to compute the group ${\rm Aut}(C^{k}_{\lambda_{1},...,\lambda_{n-2}})$, which can be easily implemented into a computer program. This, in particular, permited to see that ${\rm Aut}(C^{k}_{\lambda_{1},...,\lambda_{n-2}})$ is a subgroup of ${\rm PGL}_{n+1}(K)$. In \cite{FGHL}, this method was used to compute the group of automorphisms for the case $n=3$.
\end{rema}

\section{An standard basis of holomorphic forms of generalized Fermat curves}\label{Sec:standardbasis}
The aim of this section is to describe an special basis of holomorphic (i.e., regular) forms of a generalized Fermat curve $F_{k,n}$ of genus 
$g_{k,n} \geq 1$, i.e., $(k-1)(n-1) \geq 2$.
Let us fix a generalized Fermat curve  $C^{k}_{\lambda_{1},...,\lambda_{n-2}}$ representing $F_{k,n}$.

\subsection{The field of meromorphic maps}
 On $C^{k}_{\lambda_{1},...,\lambda_{n-2}}$ there are the following regular branched cover of degree $k^{n-1}$ (see the proof of Theorem \ref{teo1})
$$y_{j}=\frac{x_{j}}{x_{1}}:   C^{k}_{\lambda_{1},\ldots,\lambda_{n-2}} \to {\mathbb P}_{K}^{1}=K \cup \{\infty\};  \;\;  j=2,\ldots,n+1,
$$
with deck group 
${\rm deck}(y_{j})=\langle a_{2}, \ldots, a_{j-1}, a_{j+1},\ldots, a_{n+1}\rangle \cong {\mathbb Z}_{k}^{n-1},$ 
its zeros given by the fixed points of $a_{j}$ and its poles being the fixed points of $a_{1}$.
In what follows, we will use the notation $z:=y_{2}$. 
The above meromorphic  maps satisfy the following relations
$$
\left\{\begin{array}{rcl}
1+z^{k}+y_{3}^{k}&=&0\\
\lambda_{1} +z^{k}+y_{4}^{k}&=&0\\
\vdots \mbox{  } \quad \quad  & \vdots & \vdots\\
\lambda_{n-2} +z^{k}+y_{n+1}^{k}&=&0
\end{array}
\right\}
$$
and generate the field of meromorphic maps of the curve, in fact
$$K(C^{k}_{\lambda_{1},\ldots,\lambda_{n-2}})=\bigoplus_{0\leq \alpha_{3},\ldots,\alpha_{n+1} \leq k-1} K(z) \; y_{3}^{\alpha_{3}} y_{4}^{\alpha_{4}} \cdots y_{n+1}^{\alpha_{n+1}}.$$

\begin{rema}\label{accion}
It is well known that the field $K(C^{k}_{\lambda_{1},\ldots,\lambda_{n-2}})$ can be generated just with two meromorphic maps, one of them being $z$, but in our situation it is better to use all the above generators.
The action of $H$ on the above meromorphic maps is given as follows ($a_{j}^{*} f := f \circ a_{j}^{-1}$):
$$
\left\{\begin{array}{c}
a_{1}^{*} z=\omega_{k} z, \; a_{2}^{*} z=\omega_{k}^{-1} z, \;  a_{j}^{*} z = z, \; j \in\{3,\ldots,n+1\}; \\
a_{1}^{*} y_{l}=\omega_{k} y_{l}, \; a_{l}^{*} y_{l}=\omega_{k}^{-1} y_{l}, \;  a_{j}^{*} y_{l} = y_{l}, \; j \in \{2,3,\ldots,n+1\}-\{l\}.
\end{array}
\right\}
$$
\end{rema}

\begin{rema}\label{identifica}
Assume $n \geq 3$ and let us consider the regular branched cover 
$$\pi_{n+1}:C^{k}_{\lambda_{1},\ldots,\lambda_{n-2}} \to C^{k}_{\lambda_{1},\ldots,\lambda_{n-3}}:[x_{1}:\cdots:x_{n+1}] \mapsto [x_{1}:\cdots:x_{n}],$$
whose deck group is $\langle a_{n+1} \rangle$.  For each meromorphic map $f:C^{k}_{\lambda_{1},\ldots,\lambda_{n-3}} \to {\mathbb P}_{K}^{1}$ there is associated the meromorphic map
$f \circ \pi_{n+1}:C^{k}_{\lambda_{1},\ldots,\lambda_{n-2}} \to {\mathbb P}_{K}^{1}$. This permits to 
assume the following identifications (under $\pi_{n+1}$) 
$$K(C^{k}_{\lambda_{1},\ldots,\lambda_{n-3}})=\bigoplus_{0\leq \alpha_{3},\ldots,\alpha_{n} \leq k-1} K(z) \; y_{3}^{\alpha_{3}} y_{4}^{\alpha_{4}} \cdots y_{n}^{\alpha_{n}},$$
and
\begin{equation}\label{descompone}
K(C^{k}_{\lambda_{1},\ldots,\lambda_{n-2}})=\bigoplus_{0\leq \alpha_{n+1} \leq k-1} K(C^{k}_{\lambda_{1},\ldots,\lambda_{n-3}}) \; y_{n+1}^{\alpha_{n+1}}.
\end{equation}

The decomposition \eqref{descompone} corresponds to the eigenspaces decomposition associated to 
the $K(C^{k}_{\lambda_{1},\ldots,\lambda_{n-3}})$-linear map
$$a_{n+1}^{*}:K(C^{k}_{\lambda_{1},\ldots,\lambda_{n-2}}) \to K(C^{k}_{\lambda_{1},\ldots,\lambda_{n-2}}):
\phi \mapsto \phi \circ a_{n+1}^{-1}.$$
\end{rema}

\subsubsection{\bf Divisors of $y_{j}$}
If the set of fixed points of $a_{j}$ is given by $\{ p_{j1},\ldots,p_{jk^{n-1}}\}$, then we set the 
corresponding divisor
$${\rm Fix}_{div}(a_{j})=\sum_{i=1}^{k^{n-1}} p_{ji} \in Div(C^{k}_{n}),\; j=1,\ldots,n+1.$$

The above permits to observe that the divisor of the meromorphic map $y_{j}$, for $j=2,\ldots,n+1$, is 
$$(y_{j})={\rm Fix}_{div}(a_{j})-{\rm Fix}_{div}(a_{1}).$$
In particular, for $i \neq j \in \{1,\ldots,n+1\}$, the divisor of the meromorphic map $y_{ji}:=y_{j}/y_{i}$
is
$$(y_{ji})={\rm Fix}_{div}(a_{j})-{\rm Fix}_{div}(a_{i}).$$

\subsection{The space of meromorphic forms}
Since $dz$ is a meromorphic form of $C^{k}_{\lambda_{1},...,\lambda_{n-2}}$, the previous asserts that its 
space of meromorphic forms is given by 
$${\mathcal M}(C^{k}_{\lambda_{1},\ldots,\lambda_{n-2}})=\bigoplus_{0\leq \alpha_{3},\ldots,\alpha_{n+1} \leq k-1} K(z) \frac{dz}{ y_{3}^{\alpha_{3}} y_{4}^{\alpha_{4}} \cdots y_{n+1}^{\alpha_{n+1}}}.$$

The meromorphic map $z$ is a regular branched cover of degree $k^{n-1}$, whose branch points are the fixed points of the elements $a_{3},\ldots,a_{n+1}$, each one of order $k$. The branch values of $z$ are given by the $k$-roots of the points $-1,-\lambda_{1},\ldots,-\lambda_{n-2}$. In particular, the divisors of the meromorphic forms $dz$ and $dy_{ij}$ are
$$(dz)=\sum_{j=3}^{n+1} (k-1) {\rm Fix}_{div}(a_{j}) -2{\rm Fix}_{div}(a_{1}), \;
(dy_{ji})=\sum_{s \neq i,j}^{n+1} (k-1) {\rm Fix}_{div}(a_{s}) -2{\rm Fix}_{div}(a_{i}).$$

If $r \in {\mathbb Z}$ and $(\alpha_{3},\ldots,\alpha_{n+1}) \in \{0,1,\ldots,k-1\}^{n-1}$, then we may consider the meromorphic form
\begin{equation}\label{formasmeromorfas}
\theta_{r;\alpha_{3},\ldots,\alpha_{n+1}}=\frac{z^{r} dz}{ y_{3}^{\alpha_{3}} y_{4}^{\alpha_{4}} \cdots y_{n+1}^{\alpha_{n+1}}},
\end{equation}
whose divisor is
\begin{equation}\label{divisorform}
\left( \theta_{r;\alpha_{3},\ldots,\alpha_{n+1}} \right)= (\alpha_{3}+\cdots + \alpha_{n+1}-2-r){\rm Fix}_{div}(a_{1}) +
r {\rm Fix}_{div}(a_{2})+ \sum_{j=3}^{n+1} (k-1-\alpha_{j}) {\rm Fix}_{div}(a_{j}).
\end{equation}

\begin{rema}\label{accion2}
By Remark \ref{accion}, we may see the following (the pull-back action of element of $H$ on the above meromorphic forms):
$$a_{j}^{*}(\theta_{r;\alpha_{3},\ldots,\alpha_{n+1}})=\left\{ \begin{array}{ll} 
\omega_{k}^{r+1-(\alpha_{3}+\cdots+\alpha_{n+1})} \theta_{r;\alpha_{3},\ldots,\alpha_{n+1}}, & j=1,\\
\omega_{k}^{-r-1} \theta_{r;\alpha_{3},\ldots,\alpha_{n+1}}, & j=2,\\
\omega_{k}^{\alpha_{j}} \theta_{r;\alpha_{3},\ldots,\alpha_{n+1}}, & j \in\{3,\ldots,n+1\}.\\
\end{array}\right.$$
\end{rema}

\subsection{The standard basis for the space of holomorphic forms}
By looking at the divisor form of $\theta_{r;\alpha_{3},\ldots,\alpha_{n+1}}$ (see \eqref{divisorform}), we may see that 
it is holomorphic on $C^{k}_{\lambda_{1},...,\lambda_{n-2}}$ if and only if
$(r;\alpha_{3},\ldots,\alpha_{n+1}) \in I_{k,n}$, where
$$I_{k,n}=\left\{(r;\alpha_{3},\ldots,\alpha_{n+1}); \alpha_{j} \in \{0,1,\ldots, k-1\}, \; 0 \leq r \leq \alpha_{3}+\cdots+\alpha_{n+1}-2\right\}.$$

\begin{theo}\label{teo:basis}
The collection ${\mathcal B}^{can}:=\{\theta_{r;\alpha_{3},\ldots,\alpha_{n+1}}\}_{(r;\alpha_{3},\ldots,\alpha_{n+1}) \in I_{k,n}}$ 
provides a basis for the space $H^{1,0}(C^{k}_{\lambda_{1},\ldots,\lambda_{n-2}})$ of holomorphic forms of $C^{k}_{\lambda_{1},\ldots,\lambda_{n-2}}$, called the standard basis.
\end{theo}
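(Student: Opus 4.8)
The plan is to assemble the three ingredients already laid out in the preceding lemmas and a dimension count. First I would recall that, by inspection of the divisor formula \eqref{divisorform}, each form $\theta_{r;\alpha_{3},\ldots,\alpha_{n+1}}$ with $(r;\alpha_{3},\ldots,\alpha_{n+1}) \in I_{k,n}$ has an effective divisor, hence lies in $H^{1,0}(C^{k}_{\lambda_{1},\ldots,\lambda_{n-2}})$; the defining inequalities of $I_{k,n}$ are precisely the conditions $\alpha_{3}+\cdots+\alpha_{n+1}-2-r \geq 0$, $r \geq 0$, and $k-1-\alpha_{j} \geq 0$ that make all three coefficients in \eqref{divisorform} nonnegative. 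So ${\mathcal B}^{can} \subset H^{1,0}(C^{k}_{\lambda_{1},\ldots,\lambda_{n-2}})$.

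Next I would invoke Lemma \ref{independiente}, which gives that ${\mathcal B}^{can}$ is $K$-linearly independent (this comes from Remark \ref{accion2}: forms in distinct $H$-eigenspaces are automatically independent, and within a fixed eigenspace one uses that the ratios are powers of $z$ together with the divisor computation). Then I would count: by definition $\#I_{k,n} = \sum_{l=2}^{(k-1)(n-1)} (l-1)\, L(k,n,l)$, since for each choice of $(\alpha_{3},\ldots,\alpha_{n+1})$ with sum $l \geq 2$ there are exactly $l-1$ admissible values of $r$, namely $r \in \{0,1,\ldots,l-2\}$. By Lemma \ref{conteo} this sum equals $g_{k,n}$.

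Finally, a linearly independent subset of $H^{1,0}(C^{k}_{\lambda_{1},\ldots,\lambda_{n-2}})$ of cardinality $g_{k,n} = \dim_K H^{1,0}(C^{k}_{\lambda_{1},\ldots,\lambda_{n-2}})$ must be a basis, which completes the argument.

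In fact there is essentially no obstacle here: all the real work has been front-loaded into Lemmas \ref{independiente} and \ref{conteo} and into the divisor computation \eqref{divisorform}. The only point that deserves a sentence of care is the bookkeeping that translates "$\theta_{r;\alpha_{3},\ldots,\alpha_{n+1}}$ holomorphic" into "$(r;\alpha_{3},\ldots,\alpha_{n+1}) \in I_{k,n}$" — i.e.\ that the three nonnegativity conditions on the coefficients of ${\rm Fix}(a_{1})$, ${\rm Fix}(a_{2})$ and the ${\rm Fix}(a_{j})$ for $j \geq 3$ are not only sufficient but necessary for effectivity, which holds because ${\rm Fix}(a_{1}),\ldots,{\rm Fix}(a_{n+1})$ are supported on pairwise disjoint sets of points. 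Once that is noted, the theorem is the conjunction of the two lemmas plus the trivial linear-algebra fact that $g_{k,n}$ independent holomorphic forms span the $g_{k,n}$-dimensional space $H^{1,0}$.
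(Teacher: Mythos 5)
Your proposal is correct and follows exactly the paper's argument: the paper's own proof of Theorem \ref{teo:basis} is the one-line observation that it follows from Lemma \ref{independiente} (linear independence) together with Lemma \ref{conteo} (the count $\#I_{k,n}=g_{k,n}$), with the holomorphicity criterion read off from the divisor formula \eqref{divisorform} just as you describe. Your extra remark on the necessity of the nonnegativity conditions (disjoint supports of the ${\rm Fix}(a_{j})$) is a sensible clarification but not a departure from the paper's route.
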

\begin{proof}
Remark \ref{accion2}, together the divisor form \eqref{divisorform}, permits to observe
the collection ${\mathcal B}^{can}$ is $K$-linearly independent. Next, we need to prove that the cardinality of $I_{k,n}$ is equal to $g_{k,n}$.
If for each $l \in \{0,1,\ldots,(k-1)(n-1)\}$, we set 
$$L(k,n,l)=\# \left\{(t_{1},\ldots,t_{n-1}): t_{j} \in \{0,1,\ldots, k-1\}, \; t_{1}+\cdots+t_{n-1}=l\right\},$$
then 
$$\# I_{k,n}=\sum_{l=2}^{(k-1)(n-1)} (l-1) \; L(k,n,l).$$

If we set 
$C(k,n,l)=\left\{(t_{1},\ldots,t_{n-1}): t_{j} \in \{0,1,\ldots, k-1\}, \; t_{1}+\cdots+t_{n-1}=l\right\},$
then 
$$\left\{(t_{1},\ldots,t_{n-1}): t_{j} \in \{0,1,\ldots, k-1\}\right\}=\bigcup_{l=0}^{(n-1)(k-1)} C(k,n,l),$$
from which we obtain that 
$$\sum_{l=0}^{(k-1)(n-1)} L(k,n,l) = k^{n-1}.$$

Set 
$$
(*) \;\Psi_{k,n}:=\sum_{l=0}^{(k-1)(n-1)} (l-1) \; L(k,n,l)=-1+\# I_{k,n}.$$

As $(t_{1},\ldots,t_{n-1}) \in C(k,n,l)$ if and only if $(k-1-t_{1},\ldots,k-1-t_{n-1}) \in C(k,n,(n-1)(k-1)-l)$, we may see that
$L(k,n,l)=L(k,n,(n-1)(k-1)-l).$ In this way, 
$$(**) \; \Psi_{k,n}=\sum_{l=0}^{(k-1)(n-1)} (l-1) \; L(k,n,(n-1)(k-1)-l)=\sum_{l=0}^{(k-1)(n-1)} ((n-1)(k-1)-l-1) \; L(k,n,l).$$

By adding (*) and (**), we obtain
$$2 \Psi_{k,n}=((n-1)(k-1)-2) \sum_{l=0}^{(n-1)(k-1)} L(k,n,l)=k^{n-1} ((n-1)(k-1)-2).$$

It now follows from (*) that $\# I_{k,n}=1+k^{n-1} ((n-1)(k-1)-2)/2$ as desired.
\end{proof}

If $A_{k,n}=\left\{(\alpha_{3},\ldots,\alpha_{n+1}); \alpha_{j} \in \{0,1,\ldots, k-1\}, \; 2 \leq \alpha_{3}+\cdots+\alpha_{n+1}\right\}$
and $K_{d}[z]$ is the $(d+1)$ dimensional vector space of $K$-polynomials in the $z$-variable and degree at most $d \in \{0,1,\ldots,\}$, then the above theorem asserts the following decomposition.

\begin{coro} There is the following natural decomposition of the space of holomorphic forms on $C^{k}_{\lambda_{1},\ldots,\lambda_{n-2}}$
$$H^{1,0}(C^{k}_{\lambda_{1},\ldots,\lambda_{n-2}})=
\bigoplus_{(\alpha_{3},\ldots,\alpha_{n+1}) \in A_{k,n}}K_{\alpha_{3}+\cdots+\alpha_{n+1}-2}[z] \theta_{0;\alpha_{3},\ldots,\alpha_{n+1}}.$$ 
\end{coro}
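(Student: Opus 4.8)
The plan is to deduce the decomposition directly from the explicit standard basis of Theorem \ref{teo:basis}, merely reorganizing the index set $I_{k,n}$ according to the exponent tuple $(\alpha_{3},\ldots,\alpha_{n+1})$.

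First I would record the combinatorial partition of $I_{k,n}$. For a fixed $(\alpha_{3},\ldots,\alpha_{n+1})$ with each $\alpha_{j}\in\{0,1,\ldots,k-1\}$, the set of integers $r$ for which $(r;\alpha_{3},\ldots,\alpha_{n+1})\in I_{k,n}$ is exactly $\{0,1,\ldots,\alpha_{3}+\cdots+\alpha_{n+1}-2\}$, by the definition of $I_{k,n}$. This set is nonempty precisely when $\alpha_{3}+\cdots+\alpha_{n+1}\geq 2$, i.e.\ precisely when $(\alpha_{3},\ldots,\alpha_{n+1})\in A_{k,n}$. Hence $I_{k,n}$ is the disjoint union, over $(\alpha_{3},\ldots,\alpha_{n+1})\in A_{k,n}$, of the slices $\{(r;\alpha_{3},\ldots,\alpha_{n+1}):0\leq r\leq\alpha_{3}+\cdots+\alpha_{n+1}-2\}$.

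Next, from \eqref{formasmeromorfas} one has $\theta_{r;\alpha_{3},\ldots,\alpha_{n+1}}=z^{r}\,\theta_{0;\alpha_{3},\ldots,\alpha_{n+1}}$. Since $\{1,z,\ldots,z^{d}\}$ is a $K$-basis of $K_{d}[z]$, the $K$-span of the slice attached to $(\alpha_{3},\ldots,\alpha_{n+1})$ is exactly $K_{\alpha_{3}+\cdots+\alpha_{n+1}-2}[z]\,\theta_{0;\alpha_{3},\ldots,\alpha_{n+1}}$. By Theorem \ref{teo:basis}, ${\mathcal B}^{can}$ is a basis of $H^{1,0}(C^{k}_{\lambda_{1},\ldots,\lambda_{n-2}})$; grouping its elements along the partition above therefore writes $H^{1,0}(C^{k}_{\lambda_{1},\ldots,\lambda_{n-2}})$ as the sum of these subspaces, and the sum is direct because the spanning sets of the various summands are pairwise disjoint subsets of the linearly independent family ${\mathcal B}^{can}$ (Lemma \ref{independiente}).

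Finally, to justify the first displayed equality I would identify $H^{1,0}_{(\alpha_{3},\ldots,\alpha_{n+1})}(C^{k}_{\lambda_{1},\ldots,\lambda_{n-2}})$ with $K_{\alpha_{3}+\cdots+\alpha_{n+1}-2}[z]\,\theta_{0;\alpha_{3},\ldots,\alpha_{n+1}}$: by Remark \ref{accion2}, for each $j\in\{3,\ldots,n+1\}$ the operator $a_{j}^{*}$ scales $\theta_{r;\alpha_{3},\ldots,\alpha_{n+1}}$ by $\omega_{k}^{\alpha_{j}}$, a scalar independent of $r$, so this subspace is precisely the common eigenspace of $a_{3}^{*},\ldots,a_{n+1}^{*}$ with eigenvalue system $(\omega_{k}^{\alpha_{3}},\ldots,\omega_{k}^{\alpha_{n+1}})$; distinct tuples in $A_{k,n}$ give distinct eigenvalue systems, so these eigenspaces are genuinely the pieces of an $H$-equivariant decomposition. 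No substantive obstacle is expected here; the only care required is the bookkeeping of the reindexing $I_{k,n}=\bigsqcup_{A_{k,n}}\{0,\ldots,\alpha_{3}+\cdots+\alpha_{n+1}-2\}$ and checking that directness is inherited from Theorem \ref{teo:basis}.
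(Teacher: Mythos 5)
Your proposal is correct and follows essentially the same route as the paper, which presents this corollary as an immediate regrouping of the standard basis of Theorem \ref{teo:basis} along the slices of $I_{k,n}$ determined by the tuple $(\alpha_{3},\ldots,\alpha_{n+1})$, using $\theta_{r;\alpha_{3},\ldots,\alpha_{n+1}}=z^{r}\theta_{0;\alpha_{3},\ldots,\alpha_{n+1}}$ and inheriting directness from Lemma \ref{independiente}. Your additional identification of the summands as common eigenspaces of $a_{3}^{*},\ldots,a_{n+1}^{*}$ via Remark \ref{accion2} is a correct justification of why the decomposition is natural, consistent with what the paper implies.
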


\begin{rema}
The canonical embedding 
$\iota_{{\mathcal B}^{can}}:C^{k}_{\lambda_{1},\ldots,\lambda_{n-2}} \hookrightarrow {\mathbb P}_{K}^{g_{k,n}-1},$
defined by the standard basis ${\mathcal B}^{can}$ is called the {\it standard canonical embedding} and $\iota_{{\mathcal B}^{can}}(C^{k}_{\lambda_{1},\ldots,\lambda_{n-2}})$ the {\it standard canonical image curve}.
As seen in Remark \ref{accion2}, each of the elements of $H$, seen on the standard canonical image curve, is the restriction of a diagonal linear transformation.
\end{rema}

\begin{prop} \label{obs1}
Assuming $(k-1)(n-1)>2$, there is a sub-collection of cardinality $(n+1)$ inside the standard basis ${\mathcal B}^{can}$, say $\theta_{1},\ldots, \theta_{n+1}$, so that the map
$$\widehat{\iota}_{{\mathcal B}^{can}}:C^{k}_{\lambda_{1},\ldots,\lambda_{n-2}} \hookrightarrow {\mathbb P}_{K}^{n}:[x_{1}:\cdots:x_{n+1}] \mapsto [\theta_{1}:\cdots:\theta_{n+1}]$$
is the identity map.
\end{prop}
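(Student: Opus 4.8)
The plan is to exhibit, among the basis elements $\theta_{r;\alpha_{3},\ldots,\alpha_{n+1}}$ with $(r;\alpha_{3},\ldots,\alpha_{n+1})\in I_{k,n}$, a sub-collection of $n+1$ holomorphic forms whose projective coordinate ratios reproduce the homogeneous coordinates $[x_{1}:\cdots:x_{n+1}]$, i.e. which realize the standard embedding $\widehat{\iota}$. Recall $y_{j}=x_{j}/x_{1}$ for $j=2,\ldots,n+1$ and $z=y_{2}$, so that on $C^{k}_{\lambda_{1},\ldots,\lambda_{n-2}}$ one has $[x_{1}:\cdots:x_{n+1}]=[1:y_{2}:y_{3}:\cdots:y_{n+1}]$. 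Hence it suffices to find one nonzero holomorphic form $\eta$ such that the $n+1$ forms $\eta,\; y_{2}\eta,\; y_{3}\eta,\ldots,y_{n+1}\eta$ are all members of the standard basis ${\mathcal B}^{can}$; then taking $\theta_{1}=\eta$, $\theta_{j}=y_{j}\eta$ for $j\geq 2$ gives $[\theta_{1}:\cdots:\theta_{n+1}]=[1:y_{2}:\cdots:y_{n+1}]=[x_{1}:\cdots:x_{n+1}]$, which is exactly the identity on the algebraic model.

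The natural candidate is $\eta=\theta_{0;\alpha_{3},\ldots,\alpha_{n+1}}=z^{0}dz/(y_{3}^{\alpha_{3}}\cdots y_{n+1}^{\alpha_{n+1}})$ for a carefully chosen tuple $(\alpha_{3},\ldots,\alpha_{n+1})$, since multiplying by $y_{j}$ shifts exactly the exponent $\alpha_{j}$ (for $j\geq 3$) by $-1$, multiplying by $z=y_{2}$ turns $\eta$ into $\theta_{1;\alpha_{3},\ldots,\alpha_{n+1}}$ (increasing $r$ from $0$ to $1$), and $\eta$ itself is $\theta_{0;\alpha_{3},\ldots,\alpha_{n+1}}$. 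So I must choose $(\alpha_{3},\ldots,\alpha_{n+1})\in\{0,\ldots,k-1\}^{n-1}$ such that: first, $(0;\alpha_{3},\ldots,\alpha_{n+1})\in I_{k,n}$, which requires $\alpha_{3}+\cdots+\alpha_{n+1}\geq 2$; second, $(1;\alpha_{3},\ldots,\alpha_{n+1})\in I_{k,n}$, which requires $\alpha_{3}+\cdots+\alpha_{n+1}\geq 3$; and third, for each $j\in\{3,\ldots,n+1\}$, the tuple obtained by replacing $\alpha_{j}$ with $\alpha_{j}-1$ still lies in $I_{k,n}$ with $r=0$, which needs $\alpha_{j}\geq 1$ and $(\alpha_{3}+\cdots+\alpha_{n+1})-1\geq 2$, i.e. $\alpha_{3}+\cdots+\alpha_{n+1}\geq 3$ together with all $\alpha_{j}\geq 1$. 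Since $g_{k,n}>1$ means $(k-1)(n-1)>2$, one can always pick, e.g., $\alpha_{3}=\cdots=\alpha_{n+1}=1$ when $n\geq 4$, and when $n=3$ one has $k\geq 4$ so one may take $\alpha_{3}=3$ (or $\alpha_3 = 2$ suffices for the first two conditions but then the single $y_3\eta$ shift needs $\alpha_3\ge 1$, fine), and when $n=2$ one has $k\geq 4$ and picks $\alpha_{3}$ in range — I would just state a uniform choice and check the three membership conditions against the explicit description of $I_{k,n}$.

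After fixing the tuple, the verification is routine: by the divisor formula \eqref{divisorform} each of $\eta=\theta_{0;\boldsymbol\alpha}$, $z\eta=\theta_{1;\boldsymbol\alpha}$, and $y_{j}\eta=\theta_{0;\ldots,\alpha_{j}-1,\ldots}$ has an effective divisor, hence is holomorphic, and by construction all of them lie in the indexed family ${\mathcal B}^{can}$; Lemma \ref{independiente} guarantees they are distinct basis vectors. Then the map $[\theta_{1}:\cdots:\theta_{n+1}]=[\eta:y_{2}\eta:\cdots:y_{n+1}\eta]=[1:y_{2}:\cdots:y_{n+1}]=[x_{1}:\cdots:x_{n+1}]$ is manifestly the identity on $C^{k}_{\lambda_{1},\ldots,\lambda_{n-2}}$. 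The only mildly delicate point is handling the small cases $n=2,3$ uniformly in the choice of $\boldsymbol\alpha$, because there the constraint that a single coordinate $\alpha_{j}$ is the whole (or almost the whole) sum forces one to be slightly careful that $\alpha_{j}\leq k-1$ and that the decremented sum is still $\geq 2$; I expect this bookkeeping, not any conceptual issue, to be the main (and only) obstacle, and it is dispatched by the hypothesis $(k-1)(n-1)>2$.
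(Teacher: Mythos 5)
Your proposal is correct and follows essentially the same route as the paper: the paper's proof likewise takes $\eta=\theta_{0;\boldsymbol\alpha}$ and uses the collection $\eta,\ z\eta,\ y_{3}\eta,\ldots,y_{n+1}\eta$, choosing $\boldsymbol\alpha=(1,\ldots,1)$ for $n\geq 4$, $(\alpha_{3},\alpha_{4})=(2,2)$ for $n=3$, and $\alpha_{3}=3$ for $n=2$. One small correction to your bookkeeping in the $n=3$ case: there $g_{k,3}>1$ only forces $k\geq 3$ (not $k\geq 4$), so your tentative $\alpha_{3}=3$ would exceed $k-1$ when $k=3$; the choice $(2,2)$ satisfies all three membership conditions for every $k\geq 3$ and closes that case.
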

\begin{proof} We proceed to explicitly describe these $n+1$ holomorphic forms.
If $n \geq 4$, then 
$$\theta_{1}=\theta_{0;1,\ldots,1}=\frac{dz}{y_{3} \cdots y_{n+1}}, \; \theta_{2}=\theta_{1;1,\ldots,1}=\frac{z dz}{y_{3} \cdots y_{n+1}},$$
$$ \theta_{j}=\theta_{0;1,\ldots,1,0,1,\ldots,1}=\frac{dz}{y_{3} \cdots y_{j-1} y_{j+1} \cdots y_{n+1}}, \; j=3,\ldots,n+1.$$

If $n=3$ (so $k \geq 3$ as $g_{k,3}>1$), then take the collection
$$\theta_{1}=\theta_{0;2,2}=\frac{dz}{y_{3}^{2} y_{4}^{2}}, \; \theta_{2}=\theta_{1;2,2}=\frac{z dz}{y_{3}^{2}y_{4}^{2}},\;
\theta_{3}=\theta_{0;1,2}=\frac{dz}{y_{3} y_{4}^{2}}, \; \theta_{4}=\theta_{0;2,1}=\frac{dz}{y_{3}^{2}y_{4}}.$$

If $n=2$ (so $k \geq 4$), chose the collection
$$\theta_{1}=\theta_{0;3}=\frac{dz}{y_{3}^{3}}, \; \theta_{2}=\theta_{1;3}=\frac{z dz}{y_{3}^{3}},\; \theta_{3}=\theta_{0;2}=\frac{dz}{y_{3}^{2}}.$$
\end{proof}

\section{Some explicit examples}\label{Sec:Ejemplos}
\subsection{Genus one generalized Fermat curves}
There are only two types $(k,n)$ producing genus one generalized Fermat curves, these being $(k,n) \in \{(2,3), (3,2)\}$.
(1) The generalized Fermat curve of type $(3,2)$ is the classical Fermat curve 
$C:\{x_{1}^{3}+x_{2}^{3}+x_{3}^{3}=0\} \subset {\mathbb P}_{K}^{2},$
whose standard basis is given by
${\mathcal B}^{can}=\left\{\theta_{1}=\frac{dz}{y_{3}^{2}}\right\}.$
(2) The generalized Fermat curve of type $(2,3)$ is given by 
\begin{equation}
C^{2}_{\lambda}=\left\{ \begin{array}{rcl}
x_{1}^{2}+x_{2}^{2}+x_{3}^{2}&=&0\\
\lambda x_{1}^{2}+x_{2}^{2}+x_{4}^{2}&=&0
\end{array}
\right\}    \subset {\mathbb P}_{K}^{3},
\end{equation}
whose standard basis is given by
${\mathcal B}^{can}=\left\{\theta_{1}=\frac{dz}{y_{3}y_{4}}\right\}.$

\subsection{Classical Fermat curves}
A generalized Fermat curve of type $(k,2)$ of positive genus is just the classical Fermat curve of degree $k \geq 4$ (being of genus $g=(k-1)(k-2)/2$),
\begin{equation}
F_{k}=\left\{ \begin{array}{rcl}
x_{1}^{k}+x_{2}^{k}+x_{3}^{k}&=&0
\end{array}
\right\}   \subset {\mathbb P}_{K}^{2}.
\end{equation}

In this case, the standard basis is given by
$${\mathcal B}^{can}=\left\{\theta_{r;\alpha_{3}}=\frac{z^{r}dz}{y_{3}^{\alpha_{3}}}, \; 0 \leq r \leq \alpha_{3}-2, \; \alpha_{3} \in \{2,\ldots, k-1\}\right\},$$
and the three ones producing the identity map $\widehat{\iota}$ are given by
$$\theta_{1}=\theta_{0;3}=\frac{dz}{y_{3}^{3}}, \; \theta_{2}=\theta_{1;3}=\frac{z dz}{y_{3}^{3}},\; \theta_{3}=\theta_{0;2}=\frac{dz}{y_{3}^{2}}.$$

\subsection{Classical Humbert curves}
Classical Humbert curves are the generalized Fermat curves of type $(2,4)$ (these being of genus $g=5$),
\begin{equation}
C^{2}_{\lambda_{1},\lambda_{2}}=\left\{ \begin{array}{rcl}
x_{1}^{2}+x_{2}^{2}+x_{3}^{2}&=&0\\
\lambda_{1} x_{1}^{2}+x_{2}^{2}+x_{4}^{2}&=&0\\
\lambda_{2} x_{1}^{2}+x_{2}^{2}+x_{5}^{2}&=&0
\end{array}
\right\}   \subset {\mathbb P}_{K}^{4}.
\end{equation}

In this case, the standard basis is given by
$${\mathcal B}^{can}=\left\{\theta_{1}=\frac{dz}{y_{3}y_{4}y_{5}}, \; \theta_{2}=\frac{z dz}{y_{3}y_{4}y_{5}}, \; \theta_{3}=\frac{dz}{y_{4}y_{5}},\; \theta_{4}=\frac{dz}{y_{3}y_{5}}, \; \theta_{5}=\frac{dz}{y_{3}y_{4}}\right\},$$
and the standard canonical embedding is just the identity map (see Proposition \ref{obs1}).

\subsection{Generalized Fermat curves of type $(3,3)$}
Generalized Fermat curves of type $(3,3)$ have genus $g=10$ and have the form
\begin{equation}
C^{3}_{\lambda}=\left\{ \begin{array}{rcl}
x_{1}^{3}+x_{2}^{3}+x_{3}^{3}&=&0\\
\lambda x_{1}^{3}+x_{2}^{3}+x_{4}^{3}&=&0
\end{array}
\right\}   \subset {\mathbb P}_{K}^{3}.
\end{equation}

In this case, the standard basis is given by
$${\mathcal B}^{can}=\left\{\theta_{1}=\frac{dz}{y_{3}^{2}y_{4}^{2}}, \; \theta_{2}=\frac{z dz}{y_{3}^{2}y_{4}^{2}}, \; \theta_{3}=\frac{dz}{y_{3}y_{4}^{2}}, \;
\theta_{4}=\frac{dz}{y_{3}^{2}y_{4}}, \;
\theta_{5}=\frac{z^{2}dz}{y_{3}^{2}y_{4}^{2}},\;  \theta_{6}=\frac{z dz}{y_{3}^{2}y_{4}}\right.,$$
$$\left.\theta_{7}=\frac{dz}{y_{3}^{2}}, \;  \theta_{8}=\frac{z dz}{y_{3}y_{4}^{2}}, \;
\theta_{9}=\frac{dz}{y_{3}y_{4}}, \; \theta_{10}=\frac{dz}{y_{4}^{2}}\right\},$$
and the standard canonical embedding
$\iota_{{\mathcal B}^{can}}:C^{3}_{\lambda} \to {\mathbb P}_{K}^{9}$ is given by 
$$[x_{1}:x_{2}:x_{3}:x_{4}] \mapsto [\theta_{1}:\cdots:\theta_{10}]
=[x_{1}^{2}:x_{1}x_{2}:x_{1}x_{3}:x_{1}x_{4}:x_{2}^{2}:x_{2}x_{4}:x_{4}^{2}:x_{2}x_{3}:x_{3}x_{4}:x_{3}^{2}].$$

The standard canonical image curve $\iota_{{\mathcal B}^{can}}(C^{3}_{\lambda} )$ is defined as the zeroes of the polynomials (using $[t_{1}:\cdots:t_{10}]$ as the projective coordinates of ${\mathbb P}_{K}^{9}$) 
$$t_{1} t_{5}=t_{2}^{2},\; t_{1}t_{6}=t_{2}t_{4},\; t_{1}t_{7}=t_{4}^{2},\; t_{1} t_{8}=t_{2}t_{3},\; t_{1} t_{9}=t_{3}t_{4},\; t_{1} t_{10}=t_{3}^{2},$$
$$t_{1}^{3}+t_{2}^{3}+t_{3}^{3}=0, \lambda t_{1}^{3}+t_{2}^{3}+t_{4}^{2}=0.$$

The last two equations above are the generalized Fermat equations.

\section{Evaluation of the Cartier operator on the standard basis}
In this section, $K$ will denote an algebraically closed field of characteristic $p>0$.

\subsection{The Cartier operator}\label{Sec:Cartier}
Relevant geometric properties of an algebraic curve $X$, defined over $K$, are encoded in its birational invariants; for instance, its genus $g_{X}$, its automorphism group  and its $p$-rank $\gamma_{X}$ (the number of independent unramified abelian $p$-extensions of its function field, or equivalently, the $p$-rank of the $p$-torsion subgroup of its jacobian variety $JX$ \cite{Stichtenoth}). Opposite to the case of characteristic zero, 
 for $g_{X} \geq 2$, the finite group ${\rm Aut}(X)$ may have order bigger than $84(g_{X}-1)$ (Hurwitz's bound in characteristic zero); in fact the order is known to be bounded above by $16g_{X}^{4}$ (if the group has order relatively prime to $p$). Usually, $\gamma_{X}=0$ when ${\rm Aut}(X)$ is big \cite{Subrao}. Let us fix some meromorphic map $z \in K(X) \setminus K(X)^{p}$. A holomorphic form $\theta \in H^{1,0}(X)$ may be written in the form $w dz$, where  $w=u_{0}^{p}+u_{1}^{p}z+u_{2}^{p}z^{2}+\cdots+u_{p-1}^{p}z^{p-1} \in K(X)$ and $u_{0},\ldots,u_{p-1} \in K(X)$. Derivation with respect to $z$ (see Section 10 in \cite{Serre}) permits to see that  $u_{p-1}^{p}=-\frac{d^{p-1} w}{d z^{p-1}}$. The {\it Cartier operator}, introduced by Cartier in \cite{Cartier}, is the 
 $1/p$-linear operator defined as 
$${\mathscr C}:H^{1,0}(X) \to H^{1,0}(X): w dz \mapsto u_{p-1} dz,$$
 and it does not depends on the choice of $z$ \cite{H-W,Serre,Tate}. The elements in the kernel $\ker({\mathscr C})$, whose dimension is denoted by $a_{X}$, are called {\it exact} holomorphic forms of $X$. If $\alpha_{p} \cong {\rm Spec}(K[z]/\langle z^{p} \rangle)$, the group-scheme of $p^{th}$-roots of zero, and $JX[p]$ is the subgroup-scheme of $p$-torsion of the jacobian variety $JX$, then $a_{X}={\rm dim}_{K} {\rm Hom}(\alpha_{p},JX[p])$.
 The holomorphic forms fixed by ${\mathscr C}$ are those of the form $dw/w$ (called {\it logarithmic} forms), and these generate a subspace $H^{s}(X)$ of dimension equal to the $p$-rank $\gamma_{X}$ \cite{Stichtenoth2}. Another subspace is $H^{n}(X)$, formed by those holomorphic forms in the kernel of a suitable finite iterated of ${\mathscr C}$. It happens that $H^{1,0}(X)=H^{s}(X) \oplus H^{n}(X)$ (Theorem of Hasse and Witt \cite{H-W}), so 
$0 \leq a_{X}+\gamma_{X} \leq g_{X}$. It is known that $\gamma_{X}=0$ if and only if $\ker({\mathscr C})=H^{1,0}(X)$ (i.e., $a_{X}=g_{X}$) and, in this case, $X$ is called {\it supersingular}. Also, $a_{X}+\gamma_{X}$ is an upper bound for the number of factors appearing in the decomposition of $JX$ into simple principally polarized abelian varieties \cite{F-P}. In \cite{Ekedahl}, Ekedahl proved that if $\ker({\mathscr C})=H^{1,0}(X)$, then $g_{X} \leq p(p-1)/2$.
Recently, in \cite{Zhou}, Zhou has proved that if ${\rm dim}_{K} \ker({\mathscr C})=g_{X}-1$, then $g_{X} \leq p+p(p-1)/2$.

In order to describe the exact holomorphic forms of $X$, one first need to compute an explicit basis of $H^{1,0}(X)$ and then to compute their evaluations under ${\mathscr C}$. In the particular case that $X$ is a plane curve, a formula for ${\mathscr C}$ was given in \cite{S-V}, and this  has been used in \cite{M-S} to compute $a_{X}$ for the case of classical Fermat curves (and also some Hurwitz curves), in \cite{DF} for certain quotients of Ree curves, in \cite{Gross} for the Hermitian curve and in \cite{FGMPW} for the case of Suzuki curves.  Unfortunately, as generalized Fermat curves are not planar, we cannot use such formulae. As we have obtained explicitly an standard basis, we may use it to evaluate the Cartier operator.

\subsection{The case of generalized Fermat curves}
Let us now assume that (i) $k \geq2$ is relatively prime to $p$ and (ii) $(k-1)(n-1) \geq 2$. Set $F_{k,n}:=C^{k}_{\lambda_{1},\ldots,\lambda_{n-2}}$ (for $n=2$, $F_{k,2}$ is the classical Fermat curve of degree $k$). Let ${\mathscr C}_{k,n}:H^{1,0}(F_{k,n}) \to H^{1,0}(F_{k,n})$ the corresponding Cartier operator.
In Theorem \ref{teo:basis}  we have constructed a standard basis for $H^{1,0}(F_{k,n})$, given by the elements of the form
$$\theta_{r;\alpha_{3},\ldots,\alpha_{n+1}}=\frac{z^{r} dz}{y_{3}^{\alpha_{3}}\cdots y_{n+1}^{\alpha_{n+1}}}, \quad 
(r;\alpha_{3},\ldots\alpha_{n+1}) \in I_{k,n},$$
whose Cartier image is  
$${\mathscr C}_{k,n}(\theta_{r;\alpha_{3},\ldots,\alpha_{n+1}})=\left(-\frac{d^{p-1}\left(\frac{z^{r}}{y_{3}^{\alpha_{3}}\cdots y_{n+1}^{\alpha_{n+1}}} \right) }{dz^{p-1}} \right)^{1/p} dz.$$

The computation of ${\mathscr C}_{k,n}$ at the provided standard basis involves to compute $(p-1)$-order derivative computations. For $p \geq 5$ it will provide some complicated formulas. We proceed to describe the cases (i) $p=2$ and (ii) $p \geq 3$, $k\geq 2$ and $k$ dividing $p-1$ (we explicit this for $p=3$, $k=2$ and $n=4$, i.e., for classical Humbert curves).

\begin{rema}[An induction procedure]\label{observa6}
Let us consider the cyclic branched cover, whose deck group is $\langle a_{n+1} \rangle \cong {\mathbb Z}_{k}$, $\pi:F_{k,n} \to F_{k,n-1}$, defined by $\pi([x_{1}:\cdots:x_{n+1}])=[x_{1}:\cdots:x_{n}]$. Using $z=x_{2}/x_{1}$ in both curves, we obtain that $\pi^{*} \circ {\mathscr C}_{k,n-1} = {\mathscr C}_{k,n} \circ \pi^{*}$, where $\pi^{*}:H^{1,0}(F_{k,n-1})  \hookrightarrow H^{1,0}(F_{k,n})$ is the pull-back map. Then $H^{1,0}(F_{k,n-1}) \cong_{K} {\rm Fix}(a_{n+1}^{*})=\langle \theta_{r;\alpha_{3},\ldots,\alpha_{n},0}: (r;\alpha_{3},\ldots,\alpha_{n},0) \in I_{k,n}\rangle$ and
$\ker({\mathscr C}_{k,n-1})$ can be identified with ${\rm Fix}(a_{n+1}^{*}) \cap \ker({\mathscr C}_{k,n})$. This procedure may be used to compute exact forms on $F_{k,n}$ knowing them for $F_{k,n-1}$.
\end{rema}

\subsection{Characteristic ${\bf p=2}$}
In this case, $k \geq 3$ is odd and  the following properties hold:
\begin{enumerate}
\item ${\mathscr C}_{k,n}\left(f^{2} \theta\right)=f {\mathscr C}_{k,n}(\theta)$.

\item ${\mathscr C}_{k,n}\left(\left(f_{0}^{2}+f_{1}^{2}z\right) dz\right)=f_{1} dz$.

\item 
$${\mathscr C}_{k,n}\left(z^{r} dz\right)=\left\{\begin{array}{ll}
0, & \mbox{$r$ even},\\
z^{(r-1)/2} dz, & \mbox{$r$ odd}.
\end{array}
\right.
$$

\item If $r \geq 0$ is even, then ${\mathscr C}_{k,n}\left(f z^{r} dz\right)=f_{z}^{1/2} z^{r/2} dz$.

\item If $r \geq 1$ is odd, then ${\mathscr C}_{k,n}\left(f z^{r} dz\right)=(f_{z} z +f)^{1/2} z^{(r-1)/2} dz$.

\end{enumerate}

In the following, we set $\delta_{3}=1$ and, for $j=1,\ldots,n-2$, set $\delta_{3+j}=\lambda_{j}$.

\begin{theo} \label{imagencartier}
Let $p=2$, $k \geq 3$ odd and $n \geq 2$. If, for $(r;\alpha_{3},\ldots,\alpha_{n+1}) \in I_{k,n}$, we set 
$$A=A(\alpha_{3},\ldots,\alpha_{n+1})=\left\{j \in \{3,\ldots,n+1\}: \alpha_{j} \mbox{ is odd}\right\},$$
and
$$\widehat{\alpha}_{j}=\left\{\begin{array}{rl}
\alpha_{j}, & j \notin A\\
k+\alpha_{j}, & j \in A
\end{array}
\right.
$$
then
$$
{\mathscr C}_{k,n}(\theta_{r;\alpha_{3},\ldots,\alpha_{n+1}})=\left\{\begin{array}{ll}
0, & \mbox{$r$ even and $A=\emptyset$;}\\
\\
\theta_{\frac{r+k-1}{2};\widehat{\alpha}_{3}/2,\ldots,\widehat{\alpha}_{n+1}/2},& \mbox{$r$ even and $\#A=1$;}\\
\\
\sum_{s=0}^{\frac{\#A-2}{2}}  q_{s}^{1/2} \theta_{sk+\frac{r-1+k}{2};\frac{\widehat{\alpha}_{3}}{2} \cdots \frac{\widehat{\alpha}_{n+1}}{2}}, & \mbox{$r$ even and $\#A \geq 2$ even;}\\
\\
\sum_{s=0}^{\frac{\#A-1}{2}}  q_{s}^{1/2} \theta_{sk+\frac{r-1+k}{2};\frac{\widehat{\alpha}_{3}}{2} \cdots \frac{\widehat{\alpha}_{n+1}}{2}}, &  \mbox{$r$ even and $\#A \geq 3$ odd;}\\
\\
\theta_{\frac{r-1}{2};\alpha_{3},\ldots,\alpha_{n+1}},& \mbox{$r$ odd and $A=\emptyset$;}\\
\\
\delta_{j_{0}}^{1/2}\theta_{\frac{r-1}{2};\alpha_{3},\ldots,\alpha_{j_{0}-1},\frac{\alpha_{j_{0}}+k}{2}, \alpha_{j_{0}+1},\ldots,\alpha_{n+1}/2},& \mbox{$r$ odd and $A=\{j_{0}\}$;}\\
\\
\sum_{s=0}^{\frac{\#A}{2}}  d_{s}^{1/2} \theta_{sk+\frac{r-1}{2};\frac{\widehat{\alpha}_{3}}{2} \cdots \frac{\widehat{\alpha}_{n+1}}{2}}, &  \mbox{$r$ odd and $\#A \geq 2$ even;}\\
\\
\sum_{s=0}^{\frac{\#A-1}{2}}  d_{s}^{1/2} \theta_{sk+\frac{r-1}{2};\frac{\widehat{\alpha}_{3}}{2} \cdots \frac{\widehat{\alpha}_{n+1}}{2}}, &  \mbox{$r$ odd and $\#A \geq 3$ odd;}
\end{array}
\right.
$$
where, $q_{s}$ is the sum of all $(\#A-2s-1)$-products of the different elements $\delta_{j}$ for $j \in A$, with the only exception of the case $\#A$ odd and $s=(\#A-1)/2$; in which case $q_{\frac{\#A-1}{2}}=1$. Similarly, $d_{s}$ is the sum of all $(\#A-2s)$-products of the different elements $\delta_{j}$ for $j \in A$, with the only exception of the case $\#A$ even and $s=\#A/2$; in which case $q_{\frac{\#A}{2}}=1$.
\end{theo}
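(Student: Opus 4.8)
The plan is to handle all the cases uniformly. Write the basis element as $\theta_{r;\alpha_{3},\ldots,\alpha_{n+1}}=f\,z^{r}\,dz$ with $f=y_{3}^{-\alpha_{3}}\cdots y_{n+1}^{-\alpha_{n+1}}$. Since $z=y_{2}$ is a separating variable for $K(C^{k}_{\lambda_{1},\ldots,\lambda_{n-2}})$ (the extension over $K(z)$ has degree $k^{n-1}$, prime to $p=2$), the characteristic-$2$ formulas for ${\mathscr C}_{k,n}$ recorded just above apply: ${\mathscr C}_{k,n}(f z^{r}dz)=f_{z}^{1/2}z^{r/2}dz$ when $r$ is even and $=(f_{z}z+f)^{1/2}z^{(r-1)/2}dz$ when $r$ is odd, where $f_{z}:=df/dz$ and $(\,\cdot\,)^{1/2}$ denotes the inverse Frobenius. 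So the entire theorem reduces to an explicit evaluation of $f_{z}$ and of $f_{z}z+f$, followed by extracting square roots and re-collecting into $\theta$-notation; the parity of $r$ is exactly what produces the two groups of cases.

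First I would compute $f_{z}$. Differentiating the defining relation $\delta_{j}+z^{k}+y_{j}^{k}=0$ in $z$ (legitimate since $p\nmid k$) gives $y_{j}'=-z^{k-1}/y_{j}^{k-1}$, hence $y_{j}'/y_{j}=-z^{k-1}/(\delta_{j}+z^{k})$, and logarithmic differentiation of $f$ yields
$$f_{z}=-f\,z^{k-1}\sum_{j=3}^{n+1}\frac{\alpha_{j}}{\delta_{j}+z^{k}}=f\,z^{k-1}\sum_{j\in A}\frac{1}{\delta_{j}+z^{k}},$$
the last equality in characteristic $2$, where $A=A(\alpha_{3},\ldots,\alpha_{n+1})$ collects the indices with $\alpha_{j}$ odd. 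Putting the sum over the common denominator $P(z^{k}):=\prod_{j\in A}(\delta_{j}+z^{k})$, its numerator is $\frac{dP}{dT}\big|_{T=z^{k}}$ (derivative of a product), whereas $1/P(z^{k})=\prod_{j\in A}y_{j}^{-k}$ because $\delta_{j}+z^{k}=y_{j}^{k}$ in characteristic $2$; absorbing this denominator into the $y$-monomial is precisely what turns $\alpha_{j}$ into the shifted exponents $\widehat{\alpha}_{j}$ of the statement. One obtains $f_{z}=z^{k-1}\big(\tfrac{dP}{dT}|_{T=z^{k}}\big)\,y_{3}^{-\widehat{\alpha}_{3}}\cdots y_{n+1}^{-\widehat{\alpha}_{n+1}}$, and an identical manipulation gives $f_{z}z+f=\big(T\tfrac{dP}{dT}+P\big)|_{T=z^{k}}\,y_{3}^{-\widehat{\alpha}_{3}}\cdots y_{n+1}^{-\widehat{\alpha}_{n+1}}$.

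Next I would reduce the two polynomials modulo $2$. Writing $P(T)=\sum_{i=0}^{m}e_{m-i}T^{i}$ with $m=\#A$ and $e_{i}=e_{i}(\delta_{j}:j\in A)$ the elementary symmetric functions ($e_{0}=1$), in characteristic $2$ the derivative $\tfrac{dP}{dT}=\sum_{i}i\,e_{m-i}T^{i-1}$ keeps only odd $i$, so $\tfrac{dP}{dT}|_{T=z^{k}}=\sum_{s}q_{s}z^{2ks}$ with $q_{s}=e_{m-2s-1}$ and $0\le s\le\lfloor(m-1)/2\rfloor$, the top term $q_{(m-1)/2}=e_{0}=1$ for $m$ odd being exactly the exception in the statement; likewise $T\tfrac{dP}{dT}+P=\sum_{i}(i+1)e_{m-i}T^{i}$ keeps only even $i$, giving $\sum_{s}d_{s}z^{2ks}$ with $d_{s}=e_{m-2s}$, $0\le s\le\lfloor m/2\rfloor$, and $d_{m/2}=e_{0}=1$ for $m$ even. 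Since each $\widehat{\alpha}_{j}$ is even, as is $k-1$ and, in the relevant branch, $r$, one extracts square roots term by term via $(\sum a_{s})^{1/2}=\sum a_{s}^{1/2}$; multiplying by $z^{r/2}$ (resp. $z^{(r-1)/2}$) and re-collecting into the $\theta$-notation gives the stated sums, the range of $s$ being forced by the parity of $m$. The degenerate cases $m=0$ (empty product: $f_{z}=0$ and $f_{z}z+f=f$, a square) and $m=1$ ($\tfrac{dP}{dT}=1$, $T\tfrac{dP}{dT}+P=\delta_{j_{0}}$) are then checked directly and yield the isolated lines of the statement.

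The genuinely delicate points are two. First, keeping the characteristic-$2$ reductions of $\tfrac{dP}{dT}$ and $T\tfrac{dP}{dT}+P$ organized so that the surviving coefficients are precisely the $e_{m-2s-1}$ and $e_{m-2s}$ with the correct behaviour at the top index; this symmetric-function bookkeeping is the computational heart of the argument. Second, verifying that the $\theta$'s on the right-hand side are genuinely members of the standard basis, i.e. that the shifted multi-indices $(sk+\cdots\,;\,\widehat{\alpha}_{3}/2,\ldots,\widehat{\alpha}_{n+1}/2)$ lie in $I_{k,n}$; this is guaranteed by ${\mathscr C}_{k,n}(H^{1,0})\subseteq H^{1,0}$ together with the linear independence of monomial forms, but it is worth recording explicitly that $0\le\widehat{\alpha}_{j}/2\le k-1$ and that the $z$-exponents stay within the admissible window $0\le r'\le(\widehat{\alpha}_{3}+\cdots+\widehat{\alpha}_{n+1})/2-2$.
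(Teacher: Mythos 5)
Your proposal is correct and follows essentially the same route as the paper: reduce to the characteristic-$2$ Cartier rules for $fz^r\,dz$ according to the parity of $r$, compute the logarithmic derivative of $f=\prod_j y_j^{-\alpha_j}$ using $y_j^k=\delta_j+z^k$, absorb the common denominator $\prod_{j\in A}y_j^k$ to produce the shifted exponents $\widehat{\alpha}_j$, and expand the resulting numerators in powers of $z^{2k}$ to identify the coefficients $q_s$ and $d_s$. Your packaging of the two expansion identities via $P(T)=\prod_{j\in A}(T+\delta_j)$ and its elementary symmetric coefficients is a slightly cleaner justification of the formulas the paper states without proof, but it is the same computation.
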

\begin{proof}
We first note that, using the equalities $y_{j}^{k}=\delta_{j}+z^{k}$, one gets
$$\sum_{j \in A} \left(\prod_{i \in A-\{j\}} y_{i}^{k} \right)=\left\{\begin{array}{ll}
\sum_{s=0}^{\frac{\#A-2}{2}}  q_{s} z^{2sk}, & \mbox{$\#A \geq 2$  even;}\\
\\
\sum_{s=0}^{\frac{\#A-1}{2}}  q_{s} z^{2sk}, & \mbox{$\#A \geq 3$ odd;}\\
\end{array}
\right.
 $$
and
$$\prod_{j \in A} y_{j}^{k}- z^{k}\sum_{j \in A} \left(\prod_{i \in A-\{j\}} y_{i}^{k} \right)=\left\{\begin{array}{ll}
\sum_{s=0}^{\frac{\#A}{2}}  d_{s} z^{2sk}, & \mbox{$\#A \geq 2$  even;}\\
\\
\sum_{s=0}^{\frac{\#A-1}{2}}  d_{s} z^{2sk}, & \mbox{$\#A \geq 3$ odd.}\\
\end{array}
\right.
 $$

Now, since
$$\frac{d\left(\frac{z^{r}}{y_{3}^{\alpha_{3}}\cdots y_{n+1}^{\alpha_{n+1}}} \right) }{dz}=
 \frac{z^{r-1}(r - z^{k}\sum_{j=3}^{n+1} \alpha_{j}y_{j}^{-k} )}{y_{3}^{\alpha_{3}} \cdots y_{n+1}^{\alpha_{n+1}}},
 $$
we get 
$$
{\mathscr C}_{k,n}(\theta_{r;\alpha_{3},\ldots,\alpha_{n+1}})=
\left\{
\begin{array}{lcll}
0, & & & \mbox{$r$ even and $A=\emptyset$.}\\
\\
\theta_{\frac{r+k-1}{2};\widehat{\alpha}_{3}/2,\ldots,\widehat{\alpha}_{n+1}/2},&&& \mbox{$r$ even and $\#A=1$.}\\
\\
\dfrac{z^{\frac{r-1+k}{2}}\left(\sum_{j \in A} \left(\prod_{i \in A-\{j\}} y_{i}^{k} \right)\right)^{1/2}}{y_{3}^{\widehat{\alpha}_{3}/2} \cdots y_{n+1}^{\widehat{\alpha}_{n+1}/2}}dz,&&& \mbox{$r$ even and $\#A \geq 2$.}\\
\\
\theta_{\frac{r-1}{2};\alpha_{3},\ldots,\alpha_{n+1}},&&& \mbox{$r$ odd and $A=\emptyset$.}\\
\\
\delta_{j_{0}}^{1/2}\theta_{\frac{r-1}{2};\alpha_{3},\ldots,\alpha_{j_{0}-1},\frac{\alpha_{j_{0}}+k}{2}, \alpha_{j_{0}+1},\ldots,\alpha_{n+1}/2},&&& \mbox{$r$ odd and $A=\{j_{0}\}$.}\\
\\
\dfrac{z^{\frac{r-1}{2}}\left(\prod_{j \in A} y_{j}^{k}- z^{k}\sum_{j \in A} \left(\prod_{i \in A-\{j\}} y_{i}^{k} \right) \right)^{1/2}}{y_{3}^{\widehat{\alpha}_{3}/2} \cdots y_{n+1}^{\widehat{\alpha}_{n+1}/2}}dz,&&& \mbox{$r$ odd and $\#A\geq 3$.}
\end{array}
\right.
$$

Now the result follows from combining all the above.
\end{proof}

The above, for $p=2$, provides a lower bound on $a_{F_{k,n}}$ in terms of $n$ and $k$.

\begin{theo}\label{lowerbound}
$$a_{F_{k,n}}={\rm dim}_{K}(\ker({\mathscr C}_{k,n}))\geq \frac{(n-1)(k-1)}{4}\; \left(\frac{k+1}{2}\right)^{n-1}.$$
\end{theo}
\begin{proof}
Let us set
$$B=\left\{(r;\alpha_{3},\ldots,\alpha_{n+1})\in I_{k,n}: r,\alpha_{3},\ldots,\alpha_{n+1} \equiv 0 (2)\right\}.$$

As seen in the above theorem, the elements $\theta_{r;\alpha_{3},\ldots,\alpha_{n+1}}$, where $(r;\alpha_{3},\ldots,\alpha_{n+1}) \in B$, belong to the the kernel of ${\mathscr C}_{k,n}$. In this way, ${\rm dim}_{K}(\ker({\mathscr C}_{k,n})) \geq  \#B$. Next, we proceed to check that the right hand in the inequality in the above corollary is exactly $\#B$. If we set
$$T=\left\{(\alpha_{3},\ldots,\alpha_{n+1}): \alpha_{3}+\cdots+\alpha_{n+1} \geq 2, \;  \alpha_{j} \equiv 0 (2), \; \alpha_{j}\in \{0,1,\ldots,k-1\}\right\},$$
then $$\#T=\left(\frac{k+1}{2}\right)^{n-1}-1.$$

Let $I=\{2,4,6,\ldots,(n-1)(k-1)\}$ and, for each $l \in I \cup \{0\}$, we set 
$$T(l)=\left\{(\alpha_{3},\ldots,\alpha_{n+1}): \alpha_{3}+\cdots+\alpha_{n+1} =l\right\}.$$

It follows that $T$ is the disjoint union of all these sets $T(l)$, $l \in I$; in particular, $$\#T=\sum_{l \in I} \#T(l).$$ 

Next, as for each $l \in I$, the set $\{r \in \{0,1,\ldots,l-2\}: r \equiv 0 (2)\}$ has cardinality $\frac{l}{2}$, it follows that
$$\#B=\sum_{l \in I} \frac{l}{2} \cdot \#T(l)=\frac{1}{2}\sum_{l \in I} l \cdot \#T(l).$$

Now, as the rule $$(\alpha_{3},\ldots,\alpha_{n+1}) \in T(l) \mapsto (k-1-\alpha_{3},\ldots,k-1-\alpha_{n+1}) \in T((n-1)(k-1)-l)$$ provides a bijection, we also have 
$$\# T(l)=\# T((n-1)(k-1)-l).$$

The above asserts the following sequence of equalities:
$$\sum_{l \in I} l \cdot \#T(l)=\sum_{l \in I \cup \{0\}} l \cdot \#T(l)=  \sum_{l \in I \cup \{0\}} l \cdot \#T((n-1)(k-1)-l)= $$
$$= \sum_{l \in I \cup \{0\}} ((n-1)(k-1)-l) \cdot \#T(l)
= (n-1)(k-1) \sum_{l \in I \cup \{0\}}  \#T(l) - \sum_{l \in I \cup \{0\}} l \cdot \#T(l).$$
$$= (n-1)(k-1) \cdot (\#T +1) - \sum_{l \in I} l \cdot \#T(l),$$
from which we obtain:
$4\cdot \#B=2 \sum_{l \in I} l \cdot \#T(l)=(n-1)(k-1) \cdot (\# T +1).$
\end{proof}

A natural question at this point is if the above lower bound is or not sharp. Below we will see that sharpness occurs for $n=2,3$ and that it is a strictly inequality for $n=4$.

\subsubsection{\bf Example: ${\bf n=2}$ (classical Fermat curves)}
For the classical Fermat curve 
$$F_{k,2}=\{x_{1}^{k}+x_{2}^{k}+x_{3}^{k}=0\} \subset {\mathbb P}_{K}^{2},$$
the standard basis is given by the forms
$$\theta_{r;\alpha}=\frac{z^{r} dz}{y^{\alpha}}, \; 0 \leq r \leq \alpha-2, \; \alpha \in \{2,\ldots,k-1\},$$
where $z=x_{2}/x_{1}$, $y=x_{3}/x_{1}$. The image under the Cartier operator of them is as follows
$$
{\mathscr C}_{k,2}\left(\theta_{r;\alpha}\right)=\left\{\begin{array}{ll}
0, & r,\alpha \equiv 0 (2)\\
\\
 \theta_{\frac{r+k-1}{2};\frac{\alpha+k}{2}}, & r \equiv 0 (2), \; \alpha \equiv 1 (2)\\
\\
\theta_{\frac{r-1}{2};\frac{\alpha}{2}}, & r \equiv 1 (2), \; \alpha \equiv 0 (2)\\
\\
\theta_{\frac{r-1}{2};\frac{\alpha+k}{2}}, & r \equiv 1 (2), \; \alpha \equiv 1 (2)\\
\end{array}
\right.
$$

The above asserts that $\ker({\mathscr C}_{k,2})=\langle \theta_{r;\alpha}: r \equiv 0 (2) \; \mbox{ and } \; \alpha \equiv 0 (2) \rangle$, that is,
$$a_{F_{k,2}}={\rm dim}_{K} \ker({\mathscr C}_{k,2})= \frac{k^{2}-1}{8}.$$

The above has been already computed in \cite{M-S}.

\subsubsection{\bf Example: ${\bf n=3}$}
The generalized Fermat curve 
$$F_{k,3}=\left\{\begin{array}{rcl}
x_{1}^{k}+x_{2}^{k}+x_{3}^{k}&=&0\\
\lambda_{1} x_{1}^{k}+x_{2}^{k}+x_{4}^{k}&=&0
\end{array}
\right\} \subset {\mathbb P}_{K}^{3}, \quad \lambda_{1} \in K-\{0,1\},
$$
has genus $g_{k,3}=k^{3}-2k^{2}+1$. In this case, the standard basis is
$$\left\{ \theta_{r;\alpha_{3},\alpha_{4}}=\frac{z^{r} dz}{y_{3}^{\alpha_{3}}y_{4}^{\alpha_{4}}}\right\}_{(r;\alpha_{3},\alpha_{4}) \in I_{k,3}}$$
and the image of them under the Cartier operator is
$$
{\mathscr C}_{k,3} \left( \theta_{r;\alpha_{3},\alpha_{4}} \right)=\left\{\begin{array}{ll}
0, & r \equiv 0 (2), \; \alpha_{3} \equiv 0 (2), \; \alpha_{4} \equiv 0 (2)\\
\\
\theta_{\frac{r-1+k}{2};\frac{\alpha_{3}}{2},\frac{\alpha_{4}+k}{2}}, & r \equiv 0 (2), \; \alpha_{3} \equiv 0 (2), \; \alpha_{4} \equiv 1 (2)\\
\\
\theta_{\frac{r-1+k}{2};\frac{\alpha_{3}+k}{2},\frac{\alpha_{4}}{2}}, & r \equiv 0 (2), \; \alpha_{3} \equiv 1 (2), \; \alpha_{4} \equiv 0 (2)\\
\\
(1+\lambda_{1})^{1/2}\theta_{\frac{r-1+k}{2};\frac{\alpha_{3}+k}{2},\frac{\alpha_{4}+k}{2}}, & r \equiv 0 (2), \; \alpha_{3} \equiv 1 (2), \; \alpha_{4} \equiv 1 (2)\\
\\
\theta_{\frac{r-1}{2};\frac{\alpha_{3}}{2},\frac{\alpha_{4}}{2}}, & r \equiv 1 (2), \; \alpha_{3} \equiv 0 (2), \; \alpha_{4} \equiv 0 (2)\\
\\
\lambda_{1}^{1/2}\theta_{\frac{r-1}{2};\frac{\alpha_{3}}{2},\frac{\alpha_{4}+k}{2}}, & r \equiv 1 (2), \; \alpha_{3} \equiv 0 (2), \; \alpha_{4} \equiv 1 (2)\\
\\
\theta_{\frac{r-1}{2};\frac{\alpha_{3}+k}{2},\frac{\alpha_{4}}{2}}, & r \equiv 1 (2), \; \alpha_{3} \equiv 1 (2), \; \alpha_{4} \equiv 0 (2)\\
\\
\lambda_{1}^{1/2}\theta_{\frac{r-1}{2};\frac{\alpha_{3}+k}{2},\frac{\alpha_{4}+k}{2}}+\theta_{\frac{r-1+2k}{2};\frac{\alpha_{3}+k}{2},\frac{\alpha_{4}+k}{2}}, & r \equiv 1 (2), \; \alpha_{3} \equiv 1 (2), \; \alpha_{4} \equiv 1 (2)
\end{array}
\right.
$$

In particular, the above asserts that 
$$(*) \quad a_{F_{k,3}}={\rm dim}_{K} \ker({\mathscr C}_{k,3})=\frac{(k^{2}-1)(k+1)}{8}.$$

The above equality is consequence to the following facts.
\begin{enumerate}
\item The right-hand of $(*)$ is the number of triples $(r;\alpha_{3},\alpha_{4}) \in I_{k,n}$ satisfying that $r,\alpha_{3},\alpha_{4}$ are all even integers, which are in ther kernel; this provides the inequality ``$\geq$" in the above.

\item If we are outside the cases (i) $r,\alpha_{3},\alpha_{4}$ are all even and (ii) $r,\alpha_{3},\alpha_{4}$ are all odd, then the image under ${\mathscr C}_{k,3}$ of $\theta_{r;\alpha_{3},\alpha_{4}}$ determines uniquely $(r;\alpha_{3},\alpha_{4})$. So, no non-trivial $K$-linear combinations of them will produce an element of the kernel.

\item Now, if $r,\alpha_{3}, \alpha_{4}$ are all odd integers, then he image of $\theta_{r;\alpha_{3},\alpha_{4}}$ under ${\mathscr C}_{k,3}$  is equal to $\lambda_{1}^{1/2}\theta_{\frac{r-1}{2};\frac{\alpha_{3}+k}{2},\frac{\alpha_{4}+k}{2}}+\theta_{\frac{r-1+2k}{2};\frac{\alpha_{3}+k}{2},\frac{\alpha_{4}+k}{2}}$. But $\theta_{\frac{r-1}{2};\frac{\alpha_{3}+k}{2},\frac{\alpha_{4}+k}{2}}$ will be only the image of $(1+\lambda_{1})^{-1/2}\theta_{\frac{r-k}{2};\frac{\alpha_{3}+k}{2},\frac{\alpha_{4}+k}{2}}$ if $k\leq r$ and 
$\theta_{\frac{r-1+2k}{2};\frac{\alpha_{3}+k}{2},\frac{\alpha_{4}+k}{2}}$ will be the image of $(1+\lambda_{1})^{-1/2}\theta_{\frac{r+k}{2};\frac{\alpha_{3}+k}{2},\frac{\alpha_{4}+k}{2}}$ if $r \leq \alpha_{3}+\alpha_{4}-2-k$. Clearly, both conditions cannot be hold true simultaneously.
\end{enumerate}

\subsubsection{\bf Example: ${\bf k=n=3}$}
The generalized Fermat curve $F_{3,3}$ has genus $10$. In this case 
$$\begin{array}{lllll}
\theta_{1}=\frac{dz}{y_{3}y_{4}}, & \theta_{2}=\frac{dz}{y_{3}^{2}}, & \theta_{3}=\frac{dz}{y_{4}^{2}}, & \theta_{4}=\frac{dz}{y_{3}^{2}y_{4}}, &
\theta_{5}=\frac{z dz}{y_{3}y_{4}^{2}},\\
\theta_{6}=\frac{z dz}{y_{3}^{2}y_{4}}, & \theta_{7}=\frac{z dz}{y_{3}y_{4}^{2}}, & \theta_{8}=\frac{dz}{y_{3}^{2}y_{4}^{2}}, & \theta_{9}=\frac{z dz}{y_{3}^{2}y_{4}^{2}}, & \theta_{10}=\frac{z^{2} dz}{y_{3}^{2}y_{4}^{2}},
\end{array}
$$
$$\begin{array}{lllll}
{\mathscr C}_{3,3}(\theta_{1})=(1+\lambda_{1})^{1/2}\theta_{9}, & {\mathscr C}_{3,3}(\theta_{2})=0, & {\mathscr C}_{3,3}(\theta_{3})=0, & 
{\mathscr C}_{3,3}(\theta_{4})=\theta_{7}, & {\mathscr C}(\theta_{5})_{3,3}=\theta_{6},\\
{\mathscr C}_{3,3}(\theta_{6})=\lambda_{1}^{1/2} \theta_{5}, & {\mathscr C}_{3,3}(\theta_{7})=\theta_{4}, & {\mathscr C}_{3,3}(\theta_{8})=0, & {\mathscr C}_{3,3}(\theta_{9})=\theta_{1}, & {\mathscr C}_{3,3}(\theta_{10})=0.
\end{array}
$$

So, in this case
$${\mathscr C}_{3,3}\left(\sum_{j=1}^{10} \mu_{j} \theta_{j}\right)=\mu_{1}^{1/2}(1+\lambda_{1})^{1/2}\theta_{9}+\mu_{4}^{1/2}\theta_{7}+\mu_{5}^{1/2}\theta_{6}+\mu_{6}^{1/2}\lambda_{1}^{1/2}\theta_{5}+\mu_{7}^{1/2}\theta_{4}+\mu_{9}^{1/2}\theta_{1}.$$

As the kernel of ${\mathscr C}_{3,3}$ corresponds to have $\mu_{1}=\mu_{4}=\mu_{5}=\mu_{6}=\mu_{7}=\mu_{9}=0$, we get  
$\ker({\mathscr C}_{3,3})=\langle \theta_{2},\theta_{3},\theta_{8},\theta_{10}\rangle \cong K^{4}$. 
We may also observe that $H^{n}(F_{3,3})=\ker({\mathscr C}_{3,3})$, so ${\rm dim}_{K} H^{s}(F_{3,3})=6$.  In fact, the
logarithmic holomorphic forms are those satisfying
$$\mu_{2}=\mu_{3}=\mu_{8}=\mu_{10}=0,\;
\mu_{1}^{3}=1+\lambda_{1}, \mu_{9}=\mu_{1}^{2},\;
\mu_{4}^{3}=1,   \mu_{7}=\mu_{4}^{2},\;
\mu_{5}^{3}=\lambda_{1}^{2}, \mu_{6}=\mu_{5}^{2}/\lambda_{1}^{2},$$
in particular, the $2$-rank of $JF_{3,3}$ is $\gamma_{F_{3,3}}=6$. 

\begin{rema}
In \cite{CHQ}, as a consequence of Kani-Rosen results \cite{KR}, it was noted that $JF_{3,3}$ is isogenous to the product of four curves of genus one and three jacobians of genus two curves. The four genus one curves are given by
$$
\begin{array}{c}
C_{1}: y^{3}=x(x-1)(x-\lambda_{1}), \; C_{2}: y^{3}=x(x-1),\;
C_{3}: y^{3}=x(x-\lambda_{1}), \; C_{4}: y^{3}=(x-1)(x-\lambda_{1}),
\end{array}
$$
and the three genus two curves by
$$E_{1}: y^{3}=x(x-1)(x-\lambda_{1})^{2},\; E_{2}: y^{3}=x(x-1)^{2}(x-\lambda_{1}),\; E_{3}: y^{3}=x^{2}(x-1)(x-\lambda_{1}).$$
\end{rema}

\subsubsection{\bf Example: ${\bf n=4}$}
The generalized Fermat curve 
$$F_{k,4}=\left\{\begin{array}{rcl}
x_{1}^{k}+x_{2}^{k}+x_{3}^{k}&=&0\\
\lambda_{1} x_{1}^{k}+x_{2}^{k}+x_{4}^{k}&=&0\\
\lambda_{2} x_{1}^{k}+x_{2}^{k}+x_{5}^{k}&=&0
\end{array}
\right\} \subset {\mathbb P}_{K}^{4}, \quad \lambda_{1}, \lambda_{2} \in K-\{0,1\}, \; \lambda_{1} \neq \lambda_{2}
$$
has genus $g_{k,3}=1+k^{3}(3k-5)/2$. In this case, the standard basis is
$$\left\{ \theta_{r;\alpha_{3},\alpha_{4},\alpha_{5}}=\frac{z^{r} dz}{y_{3}^{\alpha_{3}}y_{4}^{\alpha_{4}}y_{5}^{\alpha_{5}}}\right\}_{(r;\alpha_{3},\alpha_{4},\alpha_{5}) \in I_{k,3}}$$
and the values of ${\mathscr C}_{k,4} \left( \theta_{r;\alpha_{3},\alpha_{4},\alpha_{5}} \right)$ are given as follows:
$$\small{\left\{\begin{array}{ll}
0, & r \equiv 0 (2), \; \alpha_{3} \equiv 0 (2), \; \alpha_{4} \equiv 0 (2), \; \alpha_{5} \equiv 0 (2)\\
\\
\theta_{\frac{r-1+k}{2};\frac{\alpha_{3}+k}{2},\frac{\alpha_{4}}{2},\frac{\alpha_{5}}{2}}, & r \equiv 0 (2), \; \alpha_{3} \equiv 1 (2), \; \alpha_{4} \equiv 0 (2), \; \alpha_{5} \equiv 0 (2)\\
\\
\theta_{\frac{r-1+k}{2};\frac{\alpha_{3}}{2},\frac{\alpha_{4}+k}{2},\frac{\alpha_{5}}{2}}, & r \equiv 0 (2), \; \alpha_{3} \equiv 0 (2), \; \alpha_{4} \equiv 1 (2), \; \alpha_{5} \equiv 0 (2)\\
\\
\theta_{\frac{r-1+k}{2};\frac{\alpha_{3}}{2},\frac{\alpha_{4}}{2},\frac{\alpha_{5}+k}{2}}, & r \equiv 0 (2), \; \alpha_{3} \equiv 0 (2), \; \alpha_{4} \equiv 0 (2), \; \alpha_{5} \equiv 1 (2)\\
\\
(1+\lambda_{2})^{1/2}\theta_{\frac{r-1+k}{2};\frac{\alpha_{3}+k}{2},\frac{\alpha_{4}}{2},\frac{\alpha_{5}+k}{2}}, & r \equiv 0 (2), \; \alpha_{3} \equiv 1 (2), \; \alpha_{4} \equiv 0 (2), \; \alpha_{5} \equiv 1 (2)\\
\\
(\lambda_{1}+\lambda_{2})^{1/2}\theta_{\frac{r-1+k}{2};\frac{\alpha_{3}}{2},\frac{\alpha_{4}+k}{2},\frac{\alpha_{5}+k}{2}}, & r \equiv 0 (2), \; \alpha_{3} \equiv 0 (2), \; \alpha_{4} \equiv 1 (2), \; \alpha_{5} \equiv 1 (2)\\
\\
(1+\lambda_{1})^{1/2}\theta_{\frac{r-1+k}{2};\frac{\alpha_{3}+k}{2},\frac{\alpha_{4}+k}{2},\frac{\alpha_{5}}{2}}, & r \equiv 0 (2), \; \alpha_{3} \equiv 1 (2), \; \alpha_{4} \equiv 1 (2), \; \alpha_{5} \equiv 0 (2)\\
\\
((\lambda_{1}+\lambda_{2}+\lambda_{1}\lambda_{2})^{1/2}+z^{k})\theta_{\frac{r+k-1}{2}; \frac{\alpha_{3}+k}{2},\frac{\alpha_{4}+k}{2},\frac{\alpha_{5}+k}{2}}, & r \equiv 0 (2), \; \alpha_{3} \equiv 1 (2), \; \alpha_{4} \equiv 1 (2), \; \alpha_{5} \equiv 1 (2)\\
\\
\theta_{\frac{r-1}{2};\frac{\alpha_{3}}{2}\frac{\alpha_{4}}{2},\frac{\alpha_{5}}{2}}, & r \equiv 1 (2), \; \alpha_{3} \equiv 0 (2), \; \alpha_{4} \equiv 0 (2), \; \alpha_{5} \equiv 0 (2)\\
\\
\theta_{\frac{r-1}{2};\frac{\alpha_{3}+k}{2},\frac{\alpha_{4}}{2},\frac{\alpha_{5}}{2}}, & r \equiv 1 (2), \; \alpha_{3} \equiv 1 (2), \; \alpha_{4} \equiv 0 (2), \; \alpha_{5} \equiv 0 (2)\\
\\
\lambda_{1}^{1/2}\theta_{\frac{r-1}{2};\frac{\alpha_{3}}{2},\frac{\alpha_{4}+k}{2},\frac{\alpha_{5}}{2}}, & r \equiv 1 (2), \; \alpha_{3} \equiv 0 (2), \; \alpha_{4} \equiv 1 (2), \; \alpha_{5} \equiv 0 (2)\\
\\
\lambda_{2}^{1/2}\theta_{\frac{r-1}{2};\frac{\alpha_{3}}{2},\frac{\alpha_{4}}{2},\frac{\alpha_{5}+k}{2}}, & r \equiv 1 (2), \; \alpha_{3} \equiv 0 (2), \; \alpha_{4} \equiv 0 (2), \; \alpha_{5} \equiv 1 (2)\\
\\
(\lambda_{2}^{1/2}+z^{k})\theta_{\frac{r-1}{2};\frac{\alpha_{3}+k}{2},\frac{\alpha_{4}}{2},\frac{\alpha_{5}+k}{2}}, & r \equiv 1 (2), \; \alpha_{3} \equiv 1 (2), \; \alpha_{4} \equiv 0 (2), \; \alpha_{5} \equiv 1 (2)\\
\\
((\lambda_{1}\lambda_{2})^{1/2}+z^{k})\theta_{\frac{r-1}{2};\frac{\alpha_{3}}{2},\frac{\alpha_{4}+k}{2},\frac{\alpha_{5}+k}{2}}, & r \equiv 1 (2), \; \alpha_{3} \equiv 0 (2), \; \alpha_{4} \equiv 1 (2), \; \alpha_{5} \equiv 1 (2)\\
\\
(\lambda_{1}^{1/2}+z^{k})\theta_{\frac{r-1}{2};\frac{\alpha_{3}+k}{2},\frac{\alpha_{4}+k}{2},\frac{\alpha_{5}}{2}}, & r \equiv 1 (2), \; \alpha_{3} \equiv 1 (2), \; \alpha_{4} \equiv 1 (2), \; \alpha_{5} \equiv 0 (2)\\
\\
((\lambda_{1}\lambda_{2})^{1/2}+(1+\lambda_{1}+\lambda_{2})^{1/2}z^{k})\theta_{\frac{r-1}{2}; \frac{\alpha_{3}+k}{2},\frac{\alpha_{4}+k}{2},\frac{\alpha_{5}+k}{2}}, & r \equiv 1 (2), \; \alpha_{3} \equiv 1 (2), \; \alpha_{4} \equiv 1 (2), \; \alpha_{5} \equiv 1 (2)
\end{array}
\right.
}
$$

Let us observe, from the above, that 
$\theta_{0;k-2,k-1,k-1}$ and $\theta_{k;k-2,k-1,k-1}$ are sent by ${\mathscr C}_{k,4}$ to $\theta_{\frac{k-1}{2};k-1,\frac{k-1}{2},\frac{k-1}{2}}$; so 
$\theta_{0;k-2,k-1,k-1}-\theta_{k;k-2,k-1,k-1}$ is also in the kernel of the Cartier operator.

\subsection{Characteristic $p \geq 3$ and $k \geq 2$ dividing $p-1$}
\begin{theo} \label{imagencartier2}
Let $p \geq 3$, $n \geq 2$ and $k \geq 2$ such that $k$ divides $p-1$. If we set $\lambda_{0}:=1$ and,
for $(r;\alpha_{3},\ldots,\alpha_{n+1}) \in I_{k,n}$, let $c_{s} \in K$ be such that
$$\prod_{j=3}^{n+1}(-z^{k}-\lambda_{j-3})^{\alpha_{j}(p-1)/k}=\sum_{s=0}^{(p-1)(\alpha_{3}+\cdots+\alpha_{n+1})}c_{s}z^{s},$$
then 
$${\mathscr C}_{k,n}(\theta_{r;\alpha_{3},\ldots,\alpha_{n+1}})=
\sum_{l=1}^{\alpha_{3}+\cdots+\alpha_{n+1}-1} c_{lp-r-1}^{1/p} \theta_{l-1;\alpha_{3},\ldots,\alpha_{n+1}}.
$$
\end{theo}
\begin{proof}
Note that
$$\theta_{r;\alpha_{3},\ldots,\alpha_{n+1}}=\frac{z^{r}\prod_{j=3}^{n+1}(y_{j}^{k})^{\alpha_{j}(p-1)/k}}{\prod_{j=3}^{n+1} y_{j}^{\alpha_{j} p}} dz=$$
$$=\frac{z^{r}\prod_{j=3}^{n+1}(-z^{k}-\lambda_{j-3})^{\alpha_{j}(p-1)/k}}{\prod_{j=3}^{n+1} y_{j}^{\alpha_{j} p}} dz=\sum_{s=0}^{(p-1)(\alpha_{3}+\cdots +\alpha_{n+1})} \frac{c_{s} z^{s+r}}{\prod_{j=3}^{n+1} y_{j}^{\alpha_{j} p}} dz.$$

As the only factors needed to compute the Cartier operator are those for $s+r=p-1$ module $p$, we may observe that 
$${\mathscr C}_{k,n}(\theta_{r;\alpha_{3},\ldots,\alpha_{n+1}})=
{\mathscr C}_{k,n}\left(\sum_{l=1}^{\alpha_{3}+\cdots +\alpha_{n+1}-1} \left(\frac{c_{lp-r-1}^{1/p} z^{l-1}}{\prod_{j=3}^{n+1} y_{j}^{\alpha_{j}}}\right)^{p} z^{p-1} dz\right)=\sum_{l=1}^{\alpha_{3}+\cdots +\alpha_{n+1}-1} c_{lp-r-1}^{1/p} \; \theta_{l-1;\alpha_{3},\ldots,\alpha_{n+1}}.$$
\end{proof}

\subsubsection{{\bf Example: Classical Fermat curves in characteristic ${\bf p = 3}$}}\label{Sec:p=3}
If $n=4$ and $k=2$, then the standard basis for  the classical Humbert curve of genus $g=5$ (in characteristic $p = 3$)
\begin{equation}
F_{2,4}=\left\{ \begin{array}{rcl}
x_{1}^{2}+x_{2}^{2}+x_{3}^{2}&=&0\\
\lambda_{1} x_{1}^{2}+x_{2}^{2}+x_{4}^{2}&=&0\\
\lambda_{2} x_{1}^{2}+x_{2}^{2}+x_{5}^{2}&=&0
\end{array}
\right\}   \subset {\mathbb P}_{K}^{4}, \; \lambda_{1}, \lambda_{2} \in K-\{0,1\}, \; \lambda_{1} \neq \lambda_{2},
\end{equation}
is given by the elements 
$$\theta_{1}=\frac{dz}{y_{3}y_{4}y_{5}}, \; \theta_{2}=\frac{z dz}{y_{3}y_{4}y_{5}}, \; \theta_{3}=\frac{dz}{y_{4}y_{5}},\; \theta_{4}=\frac{dz}{y_{3}y_{5}}, \; \theta_{5}=\frac{dz}{y_{3}y_{4}}.$$

As a consequence of Theorem \ref{imagencartier2}, 
$${\mathscr C}_{2,4}(\theta_{1})=-(\lambda_{1}+\lambda_{2}+\lambda_{1}\lambda_{2})^{1/3} \theta_{1}, \; {\mathscr C}_{2,4}(\theta_{2})=-(1+\lambda_{1}+\lambda_{2})^{1/3}\theta_{2},$$
$${\mathscr C}_{2,4}(\theta_{3})=(\lambda_{1}+\lambda_{2})^{1/3} \theta_{3}, \; {\mathscr C}_{2,4}(\theta_{4})=(1+\lambda_{2})^{1/3}\theta_{4}, \; {\mathscr C}_{2,4}(\theta_{5})=(1+\lambda_{1})^{1/3}\theta_{5}.$$

The above asserts the following facts.
\begin{enumerate}
\item $H^{n}(F_{2,4})=\ker({\mathscr C}_{2,4})$, that is, $H^{1,0}(F_{2,4})=H^{s}(F_{2,4}) \oplus \ker({\mathscr C})$, i.e., $5=a_{F_{2,4}}+\gamma_{F_{2,4}}$.

\item If either (i) $\lambda_{1}=-1$ or (ii) $\lambda_{2}=-1$ or (iii) $\lambda_{1}+\lambda_{2}=-1$ or (iv) $\lambda_{1}+\lambda_{2}+\lambda_{1}\lambda_{2}=0$, then $a_{F_{2,4}}=1$ and $\gamma_{F_{2,4}}=4$.

\item If we are not in any of the above cases (the generic situation), then there is no exact holomorphic forms, so $a_{F_{2,4}}=0$
and $H^{1,0}(F_{2,4})=H^{s}(F_{2,4})$ (in particular, the $3$-rank of $JF_{2,4}$ is $\gamma_{F_{2,4}}=5$).
\end{enumerate}

\bigskip
\noindent
{\bf Aknowledgment:} The author would like to thank the referee for her/his valuable comments and suggestions  which permitted to improve the exposition of this article.



\begin{thebibliography}{10}

\bibitem{Beardon}
A. F. Beardon.
{\it The geometry of discrete groups}. 
Corrected reprint of the 1983 original. 
Graduate Texts in Mathematics, {\bf 91}.
 Springer-Verlag, New York, 1995. xii+337 pp. ISBN: 0--387--90788--2.

\bibitem{Cartier}
P. Cartier.
Une nouvelle op\'erateur sur les formes diff\'erentielles.
{\it C. R. Acad. Sci. Paris} {\bf 244} (1957), 426--428



\bibitem{CHQ}
M. Carvacho, R. A. Hidalgo and S. Quispe.
Jacobian variety of generalized Fermat curves.
{\it Quarterly Journal of Math.} {\bf  67} (2016), 261--284.

\bibitem{DF}
N. Dummigan and S. Farwa.
Exact holomorphic differentials on a quotient of the Ree curve.
{\it J. Algebra} {\bf 400} (2014), 249--272.

\bibitem{Ekedahl}
T. Ekedahl.
On supersingular curves and abelian varieties.
{\it Math. Scand.} {\bf 60} (1987), 151--178.

\bibitem{F-P}
S. Farnell and R. Pries. 
Families of Artin-Schreier curves with Cartier-Manin matrix of constant rank.
{\it Linear Algebra Appl.} {\bf 439} (2013), 2158--2166.

\bibitem{FGMPW}
H. Friedlander, D. Garton, B. Malmskog, R. Pries and C. Weir.
The $a$-number of jacobians of Suzuki curves.
{\it Proc. of the Amer. Math. Soc.} {\bf 141} No. 9 (2013), 3019--3028.


\bibitem{FGHL}
Y. Fuertes, G. Gonz\'alez-Diez, R. A.Hidalgo and  M. Leyton.
Automorphisms group of generalized Fermat curves of type $(k,3)$.
{\it Journal of Pure and Applied Algebra} {\bf 217} No. 10 (2013), 1791--1806.


\bibitem{GaVo}
Garc\'{\i}a A. and Voloch  J. F. 
Fermat curves over finite fields. 
{\it J. Number Theory} {\bf 30} No. 3 (1988), 345--356.



\bibitem{GHL}
G. Gonz\'{a}lez-Diez, R. A. Hidalgo, and M. Leyton-\'Alvarez.
Generalized {F}ermat curves.
{\it J. Algebra} {\bf 321} No. 6 (2009), 1643--1660.


\bibitem{Gross}
B. H. Gross. 
Group representations and lattices. 
{\it J. Amer. Math. Soc.} {\bf 3} (1990), 929--960.


\bibitem{Hasse}
H. Hasse.
Theorie de relativ-zyklischen algebraischen Funktionenk\"orper, ins-besondere bei endlichem Konstanenk\"orper. 
{\it Journ. reine angew. Math.} (Crelle) {\bf 172} (1935), 37--54.




\bibitem{H-W}
H. Hasse and E. Witt. 
Zyklische unverzweigte Erweiterungsk\"orper vom Primzahlgrade $p$ \"uber einen algebraischen Funktionenk\"orper der Characteristik $p$. 
{\it Monatsh. Math. Phys.} {\bf 43} (1936), 477--492.


\bibitem{HKLP}
R. A. Hidalgo, A. Kontogeorgis, M. Leyton-Alvarez and P. Paramantzoglou.
Automorphisms of Generalized Fermat Curves. 
{\it Journal of Pure and Applied Algebra} {\bf 221} (2017), 2312--2337. 



\bibitem{HKT}
 J. W. P. Hirschfeld, G. Korchm\'aros and F. Torres. 
{\it Algebraic curves over a finite field}. 
Princeton Series in Applied Mathematics, Princeton University Press, Princeton, NJ, 2008.


\bibitem{Hurwitz}
A. Hurwitz.
\"Uber algebraische Gebilde mit eindeutigen Transformationen in sich. 
{\it Math. Ann.} {\bf 41}  No. 3 (1892), 403--442.


\bibitem{KR}
E. Kani and M. Rosen. 
Idempotent relations and factors of Jacobians. 
{\it Math. Ann.} {\bf 284} No. 2 (1989), 307--327.

\bibitem{M-S}
M. Montanucci and P. Speziali.
The $a$-numbers of Fermat and Hurwitz curves.
{\it Journal of Pure and Applied Algebra} {\bf 222} No. 2 (2018), 477--488.


\bibitem{Oort}
F. Oort.
The Riemann-Hurwitz Formula.
The Legacy of Bernhard Riemann After One Hundred and Fifty Years. 
Edit. Lizhen Ji, Franz oort and Shing-Tung Yau. ALM {\bf 35} Vol. II, 2016, pp. 576--594.

\bibitem{Sanjeewa}
R. Sanjeewa. 
Automorphism groups of cyclic curves defined over finite fields of any characteristics.
{\it  Albanian J. Math.} {\bf 3} No. 4 (2009), 131--160. 

\bibitem{Serre}
J.-P. Serre.
Sur la topologie des vari\'et\'es alg\'ebriques en caract\'eristique $p$. 
In {\it International symposium on algebraic topology}, 24--53, Universidad National Aut\'onoma de Mexico and UNESCO, Mexico City, 1958.

\bibitem{Stichtenoth}
H. Stichtenoth.
\"Uber die Automorphismengruppe eines algebraischen Funktionenk\"orpers von
Primzahlcharakteristik, I, II. 
{\it Arch. Math.} {\bf 24} (1973), 527-544, 615--631.

\bibitem{Stichtenoth2}
H. Stichtenoth.
{\it Algebraic function fields and codes}. Springer-Verlag, Berlin and Heidelberg, 1993.

\bibitem{S-V}
K. St\"ohr and J.F. Voloch. 
A formula for the Cartier Operator on plane algebraic curves.
{\it J. Reine Angew. Math.} {\bf 377} (1987), 49-64.

\bibitem{Subrao}
D. Subrao. 
The $p$-rank of Artin-Schreier curves.
{\it  Manuscripta Math.} {\bf 16} (1975), 169--193.

\bibitem{Tate}
J. Tate. 
Genus change in inseparable extensions of function fields. 
{\it Proc. Amer. Math. Soc.} {\bf 3} (1952), 400--406.


\bibitem{VM}
R. C. Valentini and M. L. Madan. 
A Hauptsatz of L. E. Dickson and Artin-Schreier extensions.
{\it J. Reine Angew. Math.} {\bf 318} (1980), 156--177.


\bibitem{VoZie}
Voloch, Jos\'e Felipe  Zieve, Michael E. 
Rational points on some {F}ermat curves and surfaces over finite fields. 
{\it Int. J. Number Theory}  {\bf 10} (2014), no. 2, 319--325. 

\bibitem{Zhou}
Z. Zhou.
A bound on the genus of a curve with Cartier operator of small rank.
{\it Rend. Circ. Mat. Palermo Series 2} {\bf 68} (2019), 569--577.


\end{thebibliography}
\end{document}